\documentclass[11pt,reqno]{amsart}
\usepackage[T1]{fontenc}
\usepackage{amsthm}
\usepackage{amsmath,amsfonts,amssymb,mathrsfs}
\usepackage{lmodern}
\usepackage[all]{xy}

\textheight 23truecm
\topmargin-1em
\textwidth 15truecm

\newtheorem{thm}{Theorem}[section]
\newtheorem{lem}[thm]{Lemma}
\newtheorem{proposition}[thm]{Proposition}
\newtheorem{defin}{Definition}
\newtheorem{corol}[thm]{Corollary}
\newtheorem{ex}{ex}[section]

\newenvironment{lemma}{\begin{lem}\;}{\end{lem}}

\newenvironment{theorem}{\begin{thm}\;}{\end{thm}}

\theoremstyle{remark}
\newtheorem{remark}[thm]{\bf Remark}
\newtheorem{example}[ex]{\bf Example}

\mathchardef\mhyphen="2D

\newcommand{\inv}{{}^{-1}}
\newcommand{\isomor}{\,\raisebox{4pt}{$\sim$}{\kern -.89em\to}\,}








\newcommand{\cc}{\mathfrak c}

\newcommand{\frg}{\mathfrak g}

\newcommand{\fra}{\mathfrak a}

\newcommand{\CC}{\mathbb C}
\newcommand{\RR}{\mathbb R}
\newcommand{\ZZ}{\mathbb Z}

\newcommand{\QQ}{\mathbb Q}





\newcommand{\regg}{{\rm Reg}(G)}

\begin{document}

\title{On the density of images of the power maps in Lie groups}
\author[S. Bhaumik]{Saurav Bhaumik}


\author[A. Mandal]{Arunava Mandal}
 

\keywords{Power maps of Lie groups, regular elements, Cartan subgroups, weak exponentiality, full rank subgroups.}

\date{}

\begin{abstract}Let $G$ be a connected Lie group. 
In this paper, we study the density of the images of individual power maps $P_k:G\to G:g\mapsto g^k$. 
We give criteria for the density 
of $P_k(G)$ in terms of regular elements, as well as Cartan subgroups. 
In fact, we prove that if $\regg$ is the set of regular elements of $G$, then $P_k(G)\cap\regg$ is closed in $\regg$.
On the other hand, the weak exponentiality of $G$ turns out to be equivalent to the density of all the power maps $P_k$.
In linear Lie groups, weak exponentiality reduces to the density of $P_2(G)$. We also prove that
the density of the image of $P_k$ for $G$
implies the same for 
any connected full rank subgroup. 
\end{abstract}

\maketitle
  \section{Introduction}

Let $G$ be a connected Lie group and let $\mathfrak{g}$ be the Lie algebra associated to $G.$
The question of whether the exponential map $\exp:\frg\to G$ is surjective or 
has dense image has been addressed by many authors (see \cite{H}, \cite{H-M}, etc). 
A connected Lie group $G$ is called \emph{weakly exponential} (resp. \emph{exponential}) 
if the image of the exponential map $\exp$ is dense (resp. surjective).
McCrudden \cite{Mc} showed that $\exp:\frg\to G$ is surjective if and only if the \emph{power map} $P_k:G\to G$, defined by
$g\to g^k$ is surjective for all $k\ge 2.$
 The surjectivity of the power maps has been studied by P. Chatterjee (\cite{C1},  
\cite{C2}, \cite{C3}, and see there references), R. Steinberg \cite{St}, Dani and Mandal \cite{D-M}. If a connected Lie group $G$ is weakly exponential, then $P_k(G)$ is dense for all $k\ge 1$. 

For an algebraic group over $\mathbb{R}$ or $\mathbb{C}$, the power maps can have dense images without being surjective.
For a connected complex algebraic group which is not exponential (e.g., $\rm SL(2,\mathbb{C})$), 
there is some $k>1$ for which $P_k$
is not surjective, but have dense images. One can take Weil restriction to produce such an example over $\mathbb{R}.$
For an algebraic group $G$ over $\mathbb{Q}_p$, the question of dense images of the power map is 
equivalent to the study of surjectivity (see Remark \ref{algebraic case}). The surjectivity of $P_k$ in 
$G(\mathbb{Q}_p)$ ($\mathbb{Q}_p$-points of the algebraic group $G$ defined over $\mathbb{Q}_p$)
has been settled by P. Chatterjee in \cite{C2}.
 However there does not seem to be any study in the literature on the density of the individual power maps.

This motivated us to consider the following natural question. Given $k>1$, when is the image $P_k(G)$ dense in $G$?

On the other hand, weak exponentiality of a Lie group is closely related to the regular elements and Cartan subgroups.
A. Borel showed that
a connected semisimple Lie group is weakly exponential if and only if all of its Cartan subgroups are connected
(see \cite[Theorem 2.10]{H-M}).
Later, K.H. Hofmann showed that for a connected Lie group $G$ (not necessarily semisimple), the set of its
regular elements $\regg$ has
a property that, $\regg\cap \exp(\frg)$ is closed in $\regg$ (see \cite[Theorem 17]{H}). Also, he deduced criteria
for weak exponentiality of $G$ in terms of regular elements and Cartan subgroups (see \cite[Corollary 18]{H}).
Following this, K. H. Neeb proved that, a connected Lie group is weakly exponential if and only if all of its
Cartan subgroups are connected (see \cite[Theorem I.2]{N}).

It is therefore natural to ask whether $P_k(G)\cap \regg$ is closed in $\regg$ for all $k\ge 1$.

In the following we answer these questions.

\begin{theorem}\label{dense criteria}Let $G$ be a connected Lie group. Let $k>1$ be an integer. 
The following are equivalent:
\begin{enumerate}
 \item[\rm (a)]$P_k(G)$ is dense in $G$.
\item[\rm (b)] $\regg\subset P_k(G)$.
\item[\rm (c)] If $C$ is a Cartan subgroup of $G$, then $P_k(C)=C$.
\end{enumerate}
\end{theorem}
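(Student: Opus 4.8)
The plan is to prove the two equivalences $\mathrm{(a)}\Leftrightarrow\mathrm{(b)}$ and $\mathrm{(b)}\Leftrightarrow\mathrm{(c)}$. The implication $\mathrm{(b)}\Rightarrow\mathrm{(a)}$ is immediate, since $\regg$ is open and dense in $G$, so any subset containing it is dense. For $\mathrm{(a)}\Rightarrow\mathrm{(b)}$ I would invoke the closedness statement announced in the introduction, that $P_k(G)\cap\regg$ is closed in $\regg$: if $P_k(G)$ is dense then, $\regg$ being open, $P_k(G)\cap\regg$ is dense in $\regg$, and being simultaneously closed in $\regg$ it must equal $\regg$, i.e. $\regg\subset P_k(G)$. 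Thus the entire content of $\mathrm{(a)}\Leftrightarrow\mathrm{(b)}$ is carried by that closedness theorem, which I regard as the deep input.

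The easy half of the second equivalence is $\mathrm{(c)}\Rightarrow\mathrm{(b)}$: by the standard fact (Hofmann, Neeb) that every regular element lies in a Cartan subgroup, any $g\in\regg$ sits in some Cartan subgroup $C$, and then $\mathrm{(c)}$ gives $g\in P_k(C)\subset P_k(G)$; hence $\regg\subset P_k(G)$.

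The substantive direction is $\mathrm{(b)}\Rightarrow\mathrm{(c)}$. Fix a Cartan subgroup $C$ with Cartan subalgebra $\mathfrak h={\rm Lie}(C)$; since $P_k(C)\subset C$ automatically, I must show $P_k$ is onto $C$. The crucial observation is that a $k$-th root of a regular element is again regular and lies in the same Cartan subgroup. Indeed, suppose $g\in\regg\cap C$ and $g=(c')^{k}$ with $c'\in G$. Writing ${\rm Ad}(c')^{k}-1=({\rm Ad}(c')-1)\,Q$ with $Q$ commuting with ${\rm Ad}(c')-1$, any vector annihilated by a power of ${\rm Ad}(c')-1$ is annihilated by the same power of ${\rm Ad}(g)-1={\rm Ad}(c')^{k}-1$; hence the nilspace (generalized $1$-eigenspace) of ${\rm Ad}(c')$ is contained in that of ${\rm Ad}(g)$, which equals $\mathfrak h$ by regularity of $g$. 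Since every nilspace has dimension at least the rank of $G$, this containment is forced to be an equality, so $c'$ is regular with associated Cartan subalgebra $\mathfrak h$; as a regular element lies in a unique Cartan subgroup and this is determined by its Cartan subalgebra, $c'\in C$. Therefore $\regg\cap C\subset P_k(C)$.

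It then remains to pass from $\regg\cap C$, which is open and dense in $C$, to the whole of $C$. For this I would exploit the structure of the nilpotent group $C$: on the identity component $C^0$ the exponential is onto and $\exp(X)^{k}=\exp(kX)$ gives $P_k(C^0)=C^0$, while the argument above shows that every element of the component group $\pi_0(C)=C/C^0$ is a $k$-th power. Combining these yields $C\subset P_k(C)$ directly when $C$ is abelian (for instance in the reductive case), and in general it reduces to checking that $P_k(C)$ is closed in $C$, after which density of $\regg\cap C$ finishes the argument. I expect this closing-up step --- from the dense regular locus to all components and all non-regular points of $C$ --- to be the main obstacle, together with the cited closedness theorem for $G$; by contrast the regular case itself is clean, the root lemma being its heart.
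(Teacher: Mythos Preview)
Your proposal is correct and follows essentially the same route as the paper: the equivalence $\mathrm{(a)}\Leftrightarrow\mathrm{(b)}$ is exactly the paper's argument via Theorem~\ref{pk closed}, your root lemma is the content of the paper's Lemma~\ref{P_k regular}, and the passage from $\regg\cap C\subset P_k(C)$ to $P_k(C)=C$ is done in the paper precisely by invoking density of $\regg\cap C$ in $C$ together with closedness of $P_k(C)$ in $C$ (Proposition~\ref{cartan pk closed}), which you correctly flag as the main residual input. Your side remark about combining $P_k(C^0)=C^0$ with surjectivity on $\pi_0(C)$ would indeed need $C^0$ central in $C$ to work directly, so the paper does not pursue it here and relies on Proposition~\ref{cartan pk closed} instead.
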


\begin{theorem}\label{pk closed} 
Let $G$ be a connected Lie group. Let $k>1$ be an integer.
Then $P_k(G)\cap \regg$ is closed in $\regg$.
\end{theorem}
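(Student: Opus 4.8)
The plan is to verify the sequential form of closedness: I take $g\in\regg$ with $g=\lim_n g_n$, where each $g_n=h_n^{\,k}\in\regg$, and I must produce a $k$-th root of $g$. Let $C$ be the unique Cartan subgroup through the regular element $g$, put $\cc=\mathrm{Lie}(C)$, and write $C_{\mathrm{reg}}=C\cap\regg$. First I localize to $C$ via the conjugation map $\mu\colon G\times C_{\mathrm{reg}}\to\regg$, $(x,c)\mapsto xcx^{-1}$. Regularity of $g$ gives $\ker(\mathrm{Ad}(g)-1)=\cc$ and $(\mathrm{Ad}(g)-1)\frg$ complementary to $\cc$, so $\mu$ is submersive at $(e,g)$ and has a local section through $(e,g)$; this yields $g_n=x_nc_nx_n^{-1}$ with $x_n\to e$, $c_n\to g$ in $C_{\mathrm{reg}}$, and $c_n=(x_n^{-1}h_nx_n)^{k}=:(h_n')^{k}$. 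Since $c_n$ is regular and $h_n'$ commutes with it, $h_n'C(h_n')^{-1}$ is a Cartan subgroup containing $c_n$, hence equals $C$ by uniqueness, so $h_n'\in N:=N_G(C)$. It therefore suffices to show that $P_k(N)\cap C_{\mathrm{reg}}$ is closed in $C_{\mathrm{reg}}$.

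Because $\cc$ is self-normalizing, $\mathrm{Lie}(N)=\cc$ and $N^{0}=C^{0}$, while the Weyl group $W=N/C$ is finite. For $x\in N$ with $x^{k}\in C$ the image $\bar x\in W$ satisfies $\bar x^{k}=1$; passing to a subsequence I may fix $\bar h_n'=\bar w$, choose a lift $w$, and write $h_n'=w\gamma_n$ with $\gamma_n\in C$. Commuting the copies of $w$ to the left gives $(w\gamma)^{k}=w^{k}\Phi_w(\gamma)$, where $\Phi_w(\gamma)=\sigma^{-(k-1)}(\gamma)\cdots\sigma^{-1}(\gamma)\gamma$ and $\sigma=\mathrm{conj}_w$; here $w^{k}\in C$, and $\mathrm{Ad}(w)|_{\cc}$ has finite order dividing that of $\bar w$, hence dividing $k$, so $\sigma|_{C^{0}}$ has order $m\mid k$.

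Finally I show $\operatorname{im}\Phi_w$ is closed, which is what prevents the limit from leaving the image. Writing $\gamma=\gamma_0\delta$ with $\delta\in C^{0}$ and using that $C^{0}$ is abelian and central in $C$, one gets $\Phi_w(\gamma)=\Phi_w(\gamma_0)\Psi(\delta)$ with $\Psi=\prod_{j=0}^{k-1}\sigma^{-j}\colon C^{0}\to C^{0}$ a homomorphism; since $\sigma|_{C^{0}}$ has order $m\mid k$, $\Psi$ takes values in the fixed subgroup and there coincides with the $k$-th power map, whose image on the connected group $((C^{0})^{\sigma})^{0}$ is all of it (power maps are surjective on connected abelian---indeed on connected nilpotent---Lie groups). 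Hence $\operatorname{im}\Phi_w$ is a union of cosets of the closed connected subgroup $((C^{0})^{\sigma})^{0}$, so $\operatorname{im}(P_k|_{wC})=w^{k}\operatorname{im}\Phi_w$ is closed; as $g=\lim_n(w\gamma_n)^{k}$ lies in it, $g=(w\gamma)^{k}\in P_k(G)$. The main obstacle is exactly that the roots $h_n'$ need not stay bounded, so one cannot extract a convergent sequence of roots and pass to the limit; the argument must instead prove closedness of the relevant image, and the genuinely delicate points are the possibly infinite component group $C/C^{0}$ (which really occurs, e.g.\ for coverings of $\mathrm{SL}(2,\RR)$) and, when $\cc$ is a non-abelian nilpotent Cartan subalgebra, replacing the norm homomorphism $\Psi$ by the corresponding analysis of the twisted power map on a connected nilpotent group.
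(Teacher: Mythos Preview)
Your localization step (conjugating the sequence into a fixed Cartan subgroup $C$) is the same as the paper's. You miss a simplification, though: since $c_n=(h_n')^k$ is regular, Lemma~\ref{P_k regular} shows that $h_n'$ is itself regular, hence lies in the unique Cartan subgroup through $c_n$, namely $C$. Thus $h_n'\in C$ already, the Weyl element $\bar w$ is trivial, and the entire normalizer/twisted-power-map analysis collapses to the single question of whether $P_k(C)$ is closed in $C$. Granting your hypothesis that $C^{0}$ is central in $C$, that question is then immediate: $(\gamma_0\delta)^k=\gamma_0^k\delta^k$ for $\delta\in C^{0}$, so $P_k(\gamma_0 C^{0})=\gamma_0^k C^{0}$ (using $P_k(C^{0})=C^{0}$ on the connected nilpotent $C^{0}$), and $P_k(C)$ is a union of cosets of the \emph{open} subgroup $C^{0}$, hence open and closed---regardless of whether $C/C^{0}$ is finite. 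So one of your two ``delicate points'' evaporates once you land in $C$ rather than $N_G(C)$.

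The other delicate point, however, is a genuine gap: $C^{0}$ need not be abelian or central in $C$ when $\cc$ is a non-abelian nilpotent Cartan subalgebra, and without this your formula $\Phi_w(\gamma_0\delta)=\Phi_w(\gamma_0)\Psi(\delta)$, the homomorphism property of $\Psi$, and even the claim that $\sigma|_{C^{0}}$ has finite order all break down. You flag this but do not resolve it. The paper (Proposition~\ref{cartan pk closed}) closes the gap by structural reduction rather than direct computation: iteratively quotient by $Z(G)^{*}$ until $Z(G)$ is discrete (Lemma~\ref{A}(b)); pass to the linear group $G/Z(G)$, where W\"ustner's theorem yields a Levi-compatible splitting of the Cartan subgroup as (abelian)$\times$(connected nilpotent); then lift back through coverings (Lemma~\ref{B}) and strip off the simply connected nilpotent factor, eventually expressing $C$ as a central extension of a \emph{finite} $\mathbb{F}_2$-vector space $F$ by a subgroup $B$. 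At that stage $P_k(C)=\bigcup_{i=1}^{|F|} x_i^k P_k(B)$ is a finite union of closed-and-open pieces, which is what you were reaching for.
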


The proof of Theorem \ref{dense criteria} relies crucially on Theorem \ref{pk closed}. A result of W\"ustner, that every Cartan subgroup is compatible with some Levi decomposition, also turned out to be very useful in many of our proofs.

Analogous results due to Hofmann (\cite[Theorem 17, Corollary 18]{H}), and K. H. Neeb (\cite[Theorem I.2]{N}) 
in the context of exponential maps, can be
deduced from Theorem \ref{pk closed} and Theorem \ref{dense criteria} (see Corollary \ref{Cartan subgroup}). 

In analogy to McCrudden's criterion for exponential images for a connected Lie group, 
in terms of divisibility, we have the following corollary.
\begin{corol}\label{density and w.e}
 Let $G$ be a connected Lie group. Then $P_k:G\rightarrow G$ is dense for all $k$ if and only if $G$ is weakly exponential.
 \end{corol}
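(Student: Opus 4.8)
The plan is to prove the two implications separately, translating the density of all the power maps into a divisibility condition on Cartan subgroups via Theorem~\ref{dense criteria}, and then appealing to Neeb's connectedness criterion for weak exponentiality.

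The implication ``weakly exponential $\Rightarrow$ all $P_k$ dense'' is the easy one, and is essentially already observed in the introduction. If $G$ is weakly exponential and $X\in\frg$, then the one-parameter subgroup relation $\exp(X)=\exp(X/k)^k$ shows $\exp(X)\in P_k(G)$ for every $k\ge 1$; hence $\exp(\frg)\subseteq P_k(G)$, and as $\exp(\frg)$ is dense, so is $P_k(G)$.

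For the converse I would argue as follows. Assume $P_k(G)$ is dense for every $k$. By the equivalence (a)$\Leftrightarrow$(c) of Theorem~\ref{dense criteria}, every Cartan subgroup $C$ satisfies $P_k(C)=C$ for all $k>1$, i.e.\ every element of $C$ has a $k$-th root in $C$ for all $k$. I claim this forces $C$ to be connected. Set $A=C/C^0$ for the component group, which for a Cartan subgroup is finitely generated (and in fact abelian). Although $P_k$ is only a map of sets, the surjectivity of the $k$-th power map on $C$ descends to surjectivity of the $k$-th power map on $A$: lifting $a\in A$ to $c\in C$ and writing $c=d^k$ gives $a=\pi(d)^k$. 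Projecting to the abelianization $A^{\rm ab}$, this power map becomes multiplication by $k$, so $A^{\rm ab}$ is $k$-divisible for every $k$, hence divisible. A finitely generated divisible abelian group is trivial, so $A^{\rm ab}=0$, and since $A$ is nilpotent this forces $A=0$. Thus $C=C^0$ is connected. As every Cartan subgroup of $G$ is now connected, Neeb's theorem \cite[Theorem~I.2]{N} yields that $G$ is weakly exponential, completing the converse.

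The main obstacle is the middle step of the converse, namely deducing connectedness of $C$ from $P_k(C)=C$ for all $k$. The delicate points are that $P_k$ is not a homomorphism, so its surjectivity must be pushed to the abelianization of the component group, where it linearizes to multiplication by $k$; and that one needs the structural fact, drawn from the standard theory of Cartan subgroups, that $C/C^0$ is finitely generated, since it is precisely finite generation together with divisibility that forces triviality. The result of W\"ustner on compatibility of Cartan subgroups with Levi decompositions, used elsewhere in the paper, is the natural tool for pinning down this structure.
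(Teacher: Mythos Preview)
Your argument is correct, but it takes a different route from the paper's and reverses its intended logical flow. The paper's proof of the nontrivial direction uses condition (b) of Theorem~\ref{dense criteria} rather than (c): since each $P_k(G)$ contains the dense open set $\regg$, so does the intersection $\bigcap_{k\ge 1}P_k(G)$, and by McCrudden's characterization this intersection is exactly $\exp(\frg)$; hence $\exp(\frg)$ is dense. This is a two-line argument requiring no structural analysis of Cartan subgroups and, crucially, it does not invoke Neeb's theorem. In fact the paper immediately afterwards \emph{derives} Neeb's criterion (Corollary~\ref{Cartan subgroup}) from Corollary~\ref{density and w.e}, so importing Neeb as a black box here would make that deduction circular within the paper.

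Your approach via condition (c) is nonetheless sound as an independent proof. The step ``$P_k(C)=C$ for all $k$ $\Rightarrow$ $C$ connected'' is exactly \cite[Proposition~1]{HL}, which the paper itself cites; you could simply appeal to that rather than analyze $C/C^*$ by hand. If you do argue by hand, note that the finite generation of $C/C^*$ and the centrality of $C^*$ in $C$ are not entirely trivial in the general (non-linear, non-semisimple) case---they rest on W\"ustner's decomposition together with the fact that the center of a semisimple Lie group is finitely generated---so the paper's route via McCrudden is genuinely more economical.
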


 The next result also gives a characterization of the density of $P_k.$
\begin{corol}\label{dense criteria 1}
 Let $G$ be a connected Lie group and let $C$ be a Cartan subgroup of $G.$ Let $C^*$ denote the connected component of $C$
 and $k>1$ be an integer.
 Then $P_k:G\rightarrow G$ is dense if and only if
 $P_k:C/C^*\rightarrow C/C^*$ is surjective. In particular, $G$ is weakly exponential if and only if $C/C^*$
 is divisible for any Cartan subgroup $C.$
\end{corol}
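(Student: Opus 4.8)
The plan is to deduce the corollary from Theorem \ref{dense criteria} (and Corollary \ref{density and w.e} for the last assertion), reducing everything to the following relation for a single Cartan subgroup $C$ with identity component $C^*$:
\begin{equation}
P_k(C)=C \quad\Longleftrightarrow\quad P_k:C/C^*\to C/C^* \text{ is surjective.} \tag{$\star$}
\end{equation}
Granting $(\star)$, the first assertion is immediate: by Theorem \ref{dense criteria}, $P_k(G)$ is dense if and only if $P_k(C)=C$ for \emph{every} Cartan subgroup $C$, and by $(\star)$ this holds exactly when $P_k$ is surjective on each quotient $C/C^*$. It is worth stressing that the quantifier runs over all Cartan subgroups, not a single conjugacy class: in $\mathrm{SL}(2,\mathbb{R})$ the compact Cartan is connected, whereas the split one has $C/C^*\cong\mathbb{Z}/2$, and only the latter detects that $P_2$ is not dense. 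For the final ``in particular'' statement I would combine this with Corollary \ref{density and w.e}: $G$ is weakly exponential iff $P_k(G)$ is dense for all $k$, iff for every Cartan subgroup $C$ the map $P_k:C/C^*\to C/C^*$ is surjective for all $k>1$; since surjectivity of all the $P_k$ is precisely the assertion that $C/C^*$ is divisible, this is exactly the claimed criterion.

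The heart of the argument, and the main obstacle, is $(\star)$. The direction $(\Rightarrow)$ is trivial, since $P_k:C/C^*\to C/C^*$ is the map induced by $P_k:C\to C$ on the quotient. For $(\Leftarrow)$ I would use two structural facts: a Cartan subgroup $C$ is nilpotent, and its identity component $C^*$ is a connected nilpotent Lie group. On a connected nilpotent Lie group the exponential map is surjective and $\exp(X)=\exp(X/k)^k$, so $P_k(C^*)=C^*$; in particular every element of $C^*$ is a $k$-th power in $C^*$. Now given $c\in C$, surjectivity of $P_k$ on $C/C^*$ yields $d\in C$ with $d^k=cz$ for some $z\in C^*$, and the task is to correct $d$ within its coset so as to achieve $d'^{\,k}=c$ exactly.

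When $C$ is abelian this correction is clean: writing $z=w^k$ with $w\in C^*$, the element $d'=dw^{-1}$ satisfies $d'^{\,k}=d^kw^{-k}=c$. Equivalently, the snake lemma applied to $1\to C^*\to C\to C/C^*\to 1$ with the endomorphism $P_k$, together with $\coker(P_k|_{C^*})=0$, identifies $\coker(P_k|_C)$ with $\coker(P_k|_{C/C^*})$, giving the equivalence at once. The genuine difficulty is the general, non-abelian case, where $C^*$ need not be central in $C$: writing $\psi$ for conjugation by $d$ on $C^*$, one has $(dw)^k=d^k\prod_{i=0}^{k-1}\psi^{-i}(w)$, so solving $(dw)^k=c$ becomes a twisted root (norm) equation $\prod_{i=0}^{k-1}\psi^{-i}(w)=z^{-1}$ in the connected nilpotent group $C^*$.

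I expect to solve this by passing to the Lie algebra $\hh=\lie(C^*)$ and inducting along the lower central series. The linearization of the left-hand side is the operator $\sum_{i=0}^{k-1}\big(\mathrm{Ad}(d)|_\hh\big)^{-i}$; since $\mathrm{Ad}(d)|_\hh$ is unipotent (the defining condition on a Cartan subgroup is precisely that $\mathrm{Ad}(d)$ act unipotently on $\hh$), this operator has all eigenvalues equal to $k\neq0$ and is therefore invertible. That invertibility is what lets one extract the required $w$, and controlling the higher commutator corrections through this operator is the step I expect to demand the most care. As an alternative route to $(\star)$ in the general case, one may invoke W\"ustner's result that a Cartan subgroup is compatible with a Levi decomposition to split off the semisimple part (where Cartan subgroups are abelian and the elementary computation applies) from a solvable piece, thereby reducing the twisted norm equation to more tractable situations.
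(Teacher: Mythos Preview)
Your reduction to the equivalence $(\star)$ is exactly right, as is your handling of the ``in particular'' via Corollary~\ref{density and w.e}; your warning about the quantifier over Cartan subgroups is also well taken. Where your argument differs from the paper is in how you establish $(\star)$.

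The paper does \emph{not} prove $(\star)$ for a general Cartan subgroup directly. Instead it follows what you list as your alternative route: it invokes W\"ustner's decomposition $C=(C\cap S)(C\cap R)$ to identify $C/C^*$ with $C_1/C_1^*$ for the corresponding Cartan subgroup $C_1$ of the semisimple quotient $G/R$, then uses Lemma~\ref{C^* is central} (for a connected Lie group with discrete centre, $C_1^*$ is \emph{central} in $C_1$) to reduce to precisely your clean abelian computation, and finally lifts density back from $G/R$ to $G$ via Proposition~\ref{density on levi part}. So the paper sidesteps the twisted norm equation entirely by arranging that $C^*$ is central.

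Your main route, solving the twisted norm equation $\prod_{i=0}^{k-1}\psi^{-i}(w)=z^{-1}$ in the connected nilpotent $C^*$ by induction along its lower central series, is a genuinely different and more self-contained argument: it proves the stronger pointwise fact $(\star)$ for each Cartan subgroup without any appeal to Levi decompositions or to Proposition~\ref{density on levi part}. The induction works as you indicate, since each successive quotient $\Gamma_j/\Gamma_{j+1}$ is a connected abelian Lie group on which the norm map becomes a homomorphism with differential $\sum_i\mathrm{Ad}(d)^{-i}$, invertible by unipotency. One small correction: unipotency of $\mathrm{Ad}(d)|_\hh$ is not literally the defining condition on $C$ in this paper (which is phrased via fixing the roots), but it does follow immediately from the fact, recorded just after the definition, that $C$ is nilpotent. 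What you gain is a direct group-theoretic statement about nilpotent groups with divisible identity component; what the paper's route gains is that the delicate inductive bookkeeping is replaced by two structural lemmas (Lemma~\ref{C^* is central} and Proposition~\ref{density on levi part}) already needed elsewhere.
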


The density of $k$-th power map $P_k$ only depends on
its Levi part of the group $G.$ More precisely, if $R$ is the solvable radical (i.e., maximal solvable connected normal
subgroup) of $G$, then $P_k(G)$ is dense in $G$ if and only if $P_k(G/R)$ is dense in $G/R$ 
(Proposition \ref{density on levi part}).
We list some conditions for the density of
the images of power maps $P_k$ for simple Lie groups in \S 4. 

The following provides a criterion for weak exponentiality in terms of $P_2$ only.

\begin{corol}\label{square map}
 Let $G$ be a connected linear Lie group. Then $G$ is weakly exponential if and only if $P_2:G\rightarrow G$ is dense.
\end{corol}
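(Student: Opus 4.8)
The plan is to reduce \emph{both} of the conditions in the statement, via Corollary \ref{dense criteria 1}, to a single statement about the component group of a Cartan subgroup, and then to exploit the fact that for a \emph{linear} group this component group is very restricted. Fix a Cartan subgroup $C$ of $G$ and write $C^*$ for its identity component. Corollary \ref{dense criteria 1} already tells us that $G$ is weakly exponential precisely when $C/C^*$ is divisible (for every Cartan subgroup $C$), and that $P_2$ has dense image precisely when $P_2\colon C/C^*\to C/C^*$ is surjective. So everything comes down to comparing these two conditions on the abelian group $A:=C/C^*$.

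The key structural input I would isolate as a lemma is the following: \emph{if $G$ is a connected linear Lie group, then the component group $C/C^*$ of every Cartan subgroup $C$ is a finite elementary abelian $2$-group.} Granting this, the corollary is immediate. On an elementary abelian $2$-group $A$ every element satisfies $x^2=e$, so the squaring map $P_2$ is the trivial endomorphism with image $\{e\}$; hence $P_2\colon A\to A$ is surjective if and only if $A=\{e\}$. For the same reason $A$ is divisible if and only if $A=\{e\}$: if $A\neq\{e\}$ and $x\in A$ is nontrivial, then $x$ cannot be written as $y^2$ since $y^2=e$ for every $y$. Thus the two conditions in Corollary \ref{dense criteria 1} --- surjectivity of $P_2$ on $C/C^*$ and divisibility of $C/C^*$ --- both say exactly that $C=C^*$. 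Consequently $P_2$ is dense if and only if every Cartan subgroup of $G$ is connected, which in turn is equivalent to weak exponentiality. (The forward implication, that weak exponentiality forces $P_2$ to be dense, needs no linearity at all and is already contained in Corollary \ref{density and w.e}.)

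To prove the structural lemma I would invoke W\"ustner's compatibility theorem to choose a Levi decomposition $G=RS$, with $R$ the solvable radical and $S$ a semisimple Levi factor, compatible with $C$ in the sense that $C\cap S$ is a Cartan subgroup of $S$ and $C$ is generated by $C\cap R$ and $C\cap S$. Since $S$ is a closed subgroup of the linear group $G$, the restriction of a faithful representation of $G$ is faithful on $S$, so $S$ is itself a connected linear semisimple group. The radical contribution $C\cap R$ is connected (Cartan subgroups of connected solvable Lie groups being connected), so the component group of $C$ is carried entirely by the semisimple factor, $C/C^*\cong (C\cap S)/(C\cap S)^*$. For a connected linear semisimple group one is then in the setting of real reductive groups, where a Cartan subgroup $H$ is realized as (an open subgroup of) the real points of a complex maximal torus: each split factor isomorphic to $\RR^\times$ contributes a $\ZZ/2$ to $\pi_0(H)$, while the compact ($S^1$) and complex ($\CC^\times$) factors are connected. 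Hence $\pi_0(H)$ is a finite elementary abelian $2$-group, which is the desired structural statement and is classical in the theory of real reductive groups.

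The main obstacle is precisely this structural lemma; the two formal consequences on $A=C/C^*$ are trivial once it is in hand. Within the lemma the delicate points are (i) arranging, via W\"ustner's theorem, the Levi decomposition to be compatible with the given Cartan subgroup and checking that the solvable radical contributes nothing to $\pi_0$, and (ii) pinning down $\pi_0$ of a Cartan subgroup of a linear semisimple group as an \emph{elementary abelian} $2$-group rather than as some larger finite group. Once these are secured, no genuine computation with power maps remains beyond the elementary remark that squaring annihilates a group of exponent two, so it is surjective only on the trivial group.
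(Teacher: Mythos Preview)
Your proposal is correct and follows essentially the same route as the paper. Both arguments use Corollary~\ref{dense criteria 1} to reduce the question to the component group $C/C^*$, establish that for a linear group this component group is a (necessarily abelian) $2$-group, and conclude that $P_2$ is surjective on it only when it is trivial; the converse is Corollary~\ref{density and w.e} in both cases. The only cosmetic difference is that the paper first passes to the semisimple quotient $G/R$ and invokes the algebraic-group realization there to get the $2$-power component count, whereas you stay in $G$ and use W\"ustner's decomposition directly to see that $C\cap R$ is connected and $C\cap S$ carries an elementary abelian $2$-group of components---but this is exactly the structural fact the paper records earlier (in the proof of Proposition~\ref{cartan pk closed}) and implicitly relies on here.
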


This means that (compare with Theorem \ref{dense criteria}), 
in a connected linear Lie group $G$,
 if every element that belongs to a Cartan subgroup admits a square root then $G$ is weakly 
 exponential. For linear groups, this extends Neeb's result, which says that $G$ is weakly exponential if and only if all
 of its Cartan subgroups are connected. On the other hand it strengthens one of P. Chatterjee's results (\cite[Theorem 1.6]{C3}). 
 However, the corollary is no longer valid for non-linearizable groups, as we observe in Example \ref{E}.

The following can be
thought of as a power map analogue of Neeb's result (see \cite[Proposition I.6]{N}).

\begin{theorem}\label{full rank}
 Let $G$ be a connected Lie group and let $A$ be a connected full rank subgroup of $G.$ Let $k>1$ be an integer.
 Then $P_k:G\rightarrow G$ is dense implies
 $P_k:A\rightarrow A$ is dense.
 \end{theorem}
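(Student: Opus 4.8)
The plan is to deduce the density of $P_k$ on $A$ from a \emph{single} Cartan subgroup computation, transported from $G$ to $A$ along a common Cartan subalgebra. By Corollary \ref{dense criteria 1} applied to the connected Lie group $A$, it suffices to produce one Cartan subgroup $C_A$ of $A$ for which $P_k\colon C_A/C_A^{*}\to C_A/C_A^{*}$ is surjective; this already forces $P_k(A)$ to be dense. Thus the entire problem reduces to a statement about the discrete component group of a conveniently chosen Cartan subgroup of $A$, and the role of the hypothesis on $G$ is to control this component group from above.

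To set up the transport I would use that $A$ has full rank: its Lie algebra $\mathfrak a$ contains a Cartan subalgebra $\mathfrak h$ of $\mathfrak g$, and $\mathfrak h$ is then automatically a Cartan subalgebra of $\mathfrak a$ as well, since its normaliser in $\mathfrak a$ lies inside its normaliser in $\mathfrak g$, which is $\mathfrak h$. Let $C_A$ be the Cartan subgroup of $A$ and $C$ the Cartan subgroup of $G$ determined by $\mathfrak h$. Two comparison facts are needed. First, $C_A^{*}=C^{*}$: both are the connected immersed subgroup integrating $\mathfrak z_{\mathfrak g}(\mathfrak h)=\mathfrak h$, which is unique. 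Second, $C_A\subseteq C$: membership in the Cartan subgroup attached to $\mathfrak h$ is a condition on the adjoint action on $\mathfrak h$ alone, and for $a\in A$ this action is the same whether computed in $A$ or in $G$, since $\mathfrak h\subseteq\mathfrak a\subseteq\mathfrak g$. Together with $C^{*}\subseteq C_A$, these yield an injection of component groups $C_A/C_A^{*}\hookrightarrow C/C^{*}$.

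With this bridge the remaining argument is short. Since $P_k(G)$ is dense, Corollary \ref{dense criteria 1} applied to $G$ shows that $P_k\colon C/C^{*}\to C/C^{*}$ is surjective. Now $C/C^{*}$ is a finitely generated abelian group, and for such a group surjectivity of the $k$-th power map with $k>1$ forces the group to be \emph{finite} of order prime to $k$ (a nontrivial free part, or $k$-divisible torsion, would obstruct surjectivity). Hence $C/C^{*}$ is finite of order prime to $k$, and therefore so is its subgroup $C_A/C_A^{*}$; consequently $P_k$ is surjective on $C_A/C_A^{*}$ as well. Corollary \ref{dense criteria 1}, now applied to $A$ with the Cartan subgroup $C_A$, gives that $P_k(A)$ is dense in $A$, as desired.

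I expect the main obstacle to be the comparison step rather than the final algebra: one must fix the precise definition of the Cartan subgroups $C_A$ and $C$ attached to $\mathfrak h$ and verify cleanly that $C_A^{*}=C^{*}$ and $C_A\subseteq C$, so that the component groups genuinely inject. The finiteness input — that $C/C^{*}$ is finitely generated abelian, whence $P_k$-surjectivity makes it finite of order prime to $k$ — is precisely what makes the transfer of surjectivity to the subgroup $C_A/C_A^{*}$ automatic; without it, surjectivity of a power map need not descend to subgroups, and this is exactly where the full-rank hypothesis, through the common Cartan subalgebra $\mathfrak h$, is indispensable.
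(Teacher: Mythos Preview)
Your overall strategy—transfer the problem to a single Cartan subgroup shared between $G$ and $A$, then push $P_k$-surjectivity of the finite component group down to a subgroup—is natural and close in spirit to the paper's approach. But two steps are not adequately justified.

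First, the inclusion $C_A\subseteq C$ is not established by your argument. With the paper's definition, $g$ lies in the Cartan subgroup attached to $\mathfrak h$ when $\mathrm{Ad}_g|_{\mathfrak h_{\mathbb C}}$ fixes every root; but the relevant root sets are $\Delta_A$ for $A$ and $\Delta$ for $G$, with $\Delta_A\subseteq\Delta$. So the condition defining $C_A$ is \emph{a priori weaker} than that defining $C$, and one only gets $C\cap A\subseteq C_A$ for free, not the reverse. Your sentence ``a condition on the adjoint action on $\mathfrak h$ alone'' hides exactly this discrepancy. The paper sidesteps the comparison entirely: in Proposition~\ref{maximal rank odd integer} it works with $C':=C\cap A$ (which obviously sits inside $C$ and contains $C^*$) and shows $P_k(C')=C'$, then concludes via density of $\mathrm{Reg}(G)\cap A$ in $A$ rather than via Corollary~\ref{dense criteria 1} for $A$.

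Second, the assertion that $C/C^*$ is finitely generated \emph{abelian} is precisely what the paper does \emph{not} take for granted: Lemma~\ref{maximal rank on abelian Cartan} lists it as a hypothesis, and Proposition~\ref{maximal rank even integer} supplies it only after using that $P_2$-density forces $\mathrm{Ad}(G)$ to be weakly exponential (so the adjoint Cartan is connected and $C=Z(G)\cdot C^*$, whence $C/C^*$ is a quotient of the finitely generated abelian group $Z(G)$). For odd $k$ the paper gives a separate argument (Proposition~\ref{maximal rank odd integer}) using the exact sequence $1\to D/(D\cap C^*)\to C/C^*\to E/E^*\to 1$ and the fact that $E/E^*$ is an $\mathbb F_2$-vector space, without ever claiming $C/C^*$ is abelian. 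Your argument could be repaired here by replacing ``abelian'' with ``nilpotent'' (which does hold, since $C$ is nilpotent): for a finitely generated nilpotent group, surjectivity of $P_k$ still forces finiteness of order prime to $k$, and then $P_k$ is bijective on every subgroup. But as written the claim is unjustified, and combined with the first gap the proposal does not yet constitute a proof.
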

 Let $G$ be a connected linear reductive Lie group and $\mathfrak g$ be its associated Lie algebra. Let $\theta$ be the 
Cartan involution on $\mathfrak g.$ Then there is a natural Cartan decomposition $\mathfrak g=\mathfrak k+\mathfrak p$,
where $\mathfrak k$ and $\mathfrak p$ are the eigenspaces corresponding to the eigenvalues $1$ and $-1$ respectively.
It follows that $[\mathfrak k,\mathfrak k]\subseteq\mathfrak k$, $[\mathfrak k,\mathfrak p]\subseteq\mathfrak p$,
$[\mathfrak p,\mathfrak p]\subseteq\mathfrak k$ and $\mathfrak z(\mathfrak g)\subseteq\mathfrak k.$
Let $\mathfrak a$ be a maximal abelian subspace of $\mathfrak p$ and take a Cartan subalgebra $t_a$ of
$\mathfrak z_{\mathfrak k}(\mathfrak a).$ Define $\mathfrak c:=t_a+\mathfrak a.$
It is noted in Neeb \cite{N} that 
  $\mathfrak c$ is a Cartan subalgebra of $\mathfrak g$ and every Cartan
subalgebra $\mathfrak {c'}$ of $\mathfrak g$ is conjugate to a Cartan subalgebra is of the form
$t'_a +\mathfrak {a'}$, where $\mathfrak a' \subseteq \mathfrak a$ and $t'_a\supseteq t_a.$

We give below another criterion for the density of images of power maps in case of reductive groups,
which is mostly important for non-split case. 

\begin{corol}\label{dense criteria 2}
  Let $G$ be a connected reductive Lie group. Let $k>1$ be an integer.
  Then $P_k:G\rightarrow G$ is dense if and only if $P_k:Z_G(t_a)\rightarrow Z_G(t_a)$
  is dense.
 \end{corol}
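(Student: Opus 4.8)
The plan is to reduce both the density of $P_k$ on $G$ and on $H:=Z_G(t_a)$ to the Cartan-subgroup criterion of Theorem \ref{dense criteria}, and then to match up the Cartan subgroups of the two groups. Write $\lie(H)=\mathfrak z_{\mathfrak g}(t_a)$ for the Lie algebra of $H$. Since $t_a$ is abelian it lies in the centre of $\mathfrak z_{\mathfrak g}(t_a)$, and because $t_a\subseteq \mathfrak z_{\mathfrak k}(\mathfrak a)$ centralises $\mathfrak a$, the Cartan subalgebra $\mathfrak c=t_a+\mathfrak a$ is contained in $\lie(H)$. The whole argument hinges on identifying the Cartan subgroups of $H$ among those of $G$.

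First I would record the Lie-algebra correspondence. As $t_a$ consists of semisimple (indeed compact) elements, $\mathfrak z_{\mathfrak g}(t_a)$ is reductive and, by the standard fact that the Cartan subalgebras of the centraliser of a toral subalgebra are exactly the Cartan subalgebras of the ambient algebra passing through it, the Cartan subalgebras of $\lie(H)=\mathfrak z_{\mathfrak g}(t_a)$ are precisely the Cartan subalgebras of $\mathfrak g$ that contain $t_a$. On the group level, recall that the Cartan subgroups of $G$ are the centralisers $Z_G(\mathfrak c')$ of the Cartan subalgebras $\mathfrak c'$ of $\mathfrak g$ (the pointwise $\mathrm{Ad}$-fixers). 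If $t_a\subseteq \mathfrak c'$ then $Z_G(\mathfrak c')\subseteq Z_G(t_a)=H$, whence $Z_G(\mathfrak c')=Z_H(\mathfrak c')$. Combining these two observations shows that the Cartan subgroups of $H$ are exactly the Cartan subgroups of $G$ that are contained in $H$, equivalently those whose Cartan subalgebra contains $t_a$.

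Next I would feed in the conjugacy result recalled before the statement: every Cartan subalgebra of $\mathfrak g$ is $G$-conjugate to one of the form $t_a'+\mathfrak a'$ with $t_a'\supseteq t_a$, so every Cartan subgroup of $G$ is conjugate to one lying in $H$. Since $P_k$ is conjugation-equivariant, $(gcg^{-1})^k=gc^kg^{-1}$, the condition $P_k(C)=C$ is invariant under conjugating the Cartan subgroup $C$. By Theorem \ref{dense criteria}, $P_k$ is dense on $G$ iff $P_k(C)=C$ for every Cartan subgroup $C$ of $G$, and $P_k$ is dense on $H$ iff $P_k(C_H)=C_H$ for every Cartan subgroup $C_H$ of $H$. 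The Cartan subgroups of $H$ form a subfamily of those of $G$, so the second family of conditions is contained in the first, giving one implication; conversely every Cartan subgroup of $G$ is conjugate into $H$, so the $H$-conditions together with conjugation-invariance recover all of the $G$-conditions, giving the reverse implication. This establishes the claimed equivalence.

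The point needing care, which I expect to be the main obstacle, is the legitimacy of applying Theorem \ref{dense criteria} to $H$, since that theorem requires $H=Z_G(t_a)$ to be a \emph{connected} Lie group. Here I would use that $t_a\subseteq\mathfrak k$ is $\theta$-stable, so $H$ is a $\theta$-stable reductive subgroup carrying its own Cartan decomposition $H=(H\cap K)\exp(\mathfrak p\cap\lie(H))=Z_K(t_a)\exp(\mathfrak p\cap\lie(H))$. As the maximal compact subgroup $K$ of the connected group $G$ is connected, $Z_K(t_a)$ is the centraliser of the torus $\exp(t_a)$ in a connected compact group and is therefore connected, while $\exp(\mathfrak p\cap\lie(H))$ is a cell; hence $H$ is connected. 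Once this connectedness is in hand, the clean matching of Cartan subgroups above together with the two invocations of Theorem \ref{dense criteria} completes the proof.
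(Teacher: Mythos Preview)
Your argument is correct, and the converse direction (density on $H$ implies density on $G$) is essentially the same as the paper's: conjugate an arbitrary Cartan subgroup of $G$ into $H$ and use Theorem~\ref{dense criteria}, together with the identification of Cartan subgroups of $H$ with Cartan subgroups of $G$ lying inside $H$ (which the paper obtains from \cite[Lemma I.5]{N} rather than arguing directly as you do).

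The forward direction, however, is genuinely different. The paper deduces it from Theorem~\ref{full rank}: since $H=Z_G(t_a)$ is a connected full rank subgroup (a fact the paper takes from Neeb), density of $P_k$ on $G$ implies density of $P_k$ on $H$. Your route bypasses Theorem~\ref{full rank} entirely: you observe that the Cartan subgroups of $H$ form a subfamily of those of $G$, so $P_k(C)=C$ for all Cartan subgroups of $G$ immediately gives the same for all Cartan subgroups of $H$, and Theorem~\ref{dense criteria} (applied to $H$) finishes. This is more economical in the present reductive setting, since it avoids the nontrivial work behind Theorem~\ref{full rank} (Propositions~\ref{maximal rank odd integer}, \ref{maximal rank even integer} and Lemma~\ref{maximal rank on abelian Cartan}). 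The price you pay is having to check connectedness of $H$ yourself; your Cartan-decomposition argument for this is fine under the ambient linearity hypothesis set up before the corollary, whereas the paper simply cites Neeb for connectedness (and for the full-rank and reductive properties). Both approaches ultimately rest on the same conjugacy statement for Cartan subalgebras recalled before the corollary.
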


The paper is organized as follows. We recall some definitions and prove some results on regular elements, Cartan subgroups, including Theorem \ref{pk closed}, in \S 2. Theorem \ref{dense criteria}, Corollaries \ref{density and w.e}, \ref{dense criteria 1} are proved in \S3.
We discuss the density of power maps in simple Lie groups in \S 4. Corollary \ref{square map} is proved in \S 5.
Finally, in \S 6 we prove Theorem \ref{full rank} and Corollary \ref{dense criteria 2}.

\section{Regular elements, Cartan subgroups and power maps}

In this section we recall some definitions, introduce notations, prove a few lemmas and
Theorem \ref{pk closed}. 

\begin{defin}\label{regular hoffman} \rm\cite{H} An element $g$ in a  
Lie group $G$ is called \emph{regular} if the nilspace 
$N({\rm Ad}_g-1)\subset \frg$ is of minimal possible dimension. The set of regular elements of $G$ is denoted by $\rm Reg(G).$
\end{defin}
\begin{defin}\rm\cite{C1}
Let $G$ be a Lie group. An element $g$ in $G$ is called $P_k$-\emph{regular} if $dP_k$ is nonsingular at $g.$
 \end{defin}

\begin{lemma}\label{P_k regular}
Let $G$ be a real Lie group. Suppose $g,h\in G$, and $h^k=g$. Then $g$ is regular if and only if 
$h$ is both regular and $P_k$-regular.
\end{lemma}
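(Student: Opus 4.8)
The plan is to reduce the whole statement to the spectral behaviour of the single operator $T := {\rm Ad}_h$ on the complexified Lie algebra $\frg_\CC$. The starting point is that, since $g = h^k$ and ${\rm Ad}$ is a homomorphism, ${\rm Ad}_g = ({\rm Ad}_h)^k = T^k$. Writing $\lambda_1,\dots,\lambda_n$ for the eigenvalues of $T$ (with algebraic multiplicity, $n=\dim\frg$), the eigenvalues of ${\rm Ad}_g = T^k$ are $\lambda_1^k,\dots,\lambda_n^k$. Since the nilspace $N({\rm Ad}_x-1)$ is exactly the generalized $1$-eigenspace of ${\rm Ad}_x$, its dimension is the algebraic multiplicity of the eigenvalue $1$; thus $\dim N({\rm Ad}_h-1)=\#\{i:\lambda_i=1\}$ and $\dim N({\rm Ad}_g-1)=\#\{i:\lambda_i^k=1\}$. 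As $\{i:\lambda_i=1\}\subseteq\{i:\lambda_i^k=1\}$, I always get
$$\dim N({\rm Ad}_h-1)\;\le\;\dim N({\rm Ad}_g-1),$$
with equality if and only if $T$ has no eigenvalue $\lambda\neq 1$ satisfying $\lambda^k=1$, i.e.\ no eigenvalue that is a nontrivial $k$-th root of unity.

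Next I would identify the $P_k$-regularity condition with precisely this spectral condition. Using the standard formula for the differential of the power map, $dP_k$ at $h$ is carried (after trivializing source and target by translation, and up to composition with the invertible operator ${\rm Ad}_h$) to the operator $\sum_{j=0}^{k-1}({\rm Ad}_h)^{j}=p(T)$ on $\frg$, where $p(x)=1+x+\dots+x^{k-1}=(x^k-1)/(x-1)$. The roots of $p$ are exactly the $k$-th roots of unity other than $1$, so $p(T)$ is invertible if and only if $T$ has no eigenvalue among them. Hence $h$ is $P_k$-regular if and only if ${\rm Ad}_h$ has no eigenvalue $\lambda\neq 1$ with $\lambda^k=1$ — which is exactly the equality condition from the first paragraph. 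In other words, $h$ is $P_k$-regular if and only if $\dim N({\rm Ad}_h-1)=\dim N({\rm Ad}_g-1)$.

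With these two facts the lemma follows by comparing with the minimality in Definition \ref{regular hoffman}. Let $r=\min_{x\in G}\dim N({\rm Ad}_x-1)$. If $h$ is both regular and $P_k$-regular, then $\dim N({\rm Ad}_g-1)=\dim N({\rm Ad}_h-1)=r$, so $g$ is regular. Conversely, if $g$ is regular then $\dim N({\rm Ad}_g-1)=r$; since $\dim N({\rm Ad}_h-1)\le\dim N({\rm Ad}_g-1)=r$ and $r$ is the global minimum, I must have $\dim N({\rm Ad}_h-1)=r$, so $h$ is regular, and the inequality is an equality, whence $h$ is $P_k$-regular. This yields both implications.

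The only genuinely technical point is the second paragraph: one has to record the differential of $P_k$ correctly and, crucially, observe that the ubiquitous factor ${\rm Ad}_h$ is invertible, so that nonsingularity of $dP_k$ is governed solely by $p(T)=\sum_{j=0}^{k-1}({\rm Ad}_h)^{j}$ (and is insensitive to whether one trivializes on the left or the right, as $\lambda$ is a $k$-th root of unity iff $\lambda^{-1}$ is). Everything else is bookkeeping of algebraic multiplicities over $\CC$, using that the eigenvalue $1$ is real, so its generalized eigenspace is defined over $\RR$ and has real dimension equal to the complex algebraic multiplicity.
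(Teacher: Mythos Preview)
Your proof is correct and follows essentially the same approach as the paper: both reduce to the spectral analysis of ${\rm Ad}_h$ on $\frg_\CC$, showing that $h$ is $P_k$-regular iff ${\rm Ad}_h$ has no nontrivial $k$-th root of unity as eigenvalue, and that this is exactly the condition for $\dim N({\rm Ad}_h-1)=\dim N({\rm Ad}_g-1)$. The only cosmetic difference is that the paper cites \cite[Lemma 2.1]{C1} for the characterization of $P_k$-regularity, whereas you compute $dP_k$ directly as $p({\rm Ad}_h)$ up to invertible factors; this makes your argument slightly more self-contained but is not a genuinely different route.
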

\begin{proof}
Let $g$ be regular. Writing $T={\rm Ad}_g-1$, $U={\rm Ad}_h-1$ and 
\[V={\rm Ad}_{h^{(k-1)}}+{\rm Ad}_{h^{(k-2)}}+\ldots+{\rm Ad}_h+1,\]
we get $T=UV$. Then $N(U)\subset N(T)$, 
and by minimality of $\dim(N(T))$, we conclude that $h$ is regular.

Now, the complexification $\frg_{\mathbb C}$ of the real Lie algebra $\frg$, splits into direct sum of 
generalized eigenspaces $V_\alpha$ under ${\rm Ad}_h$, i.e., 
$\frg_{\mathbb C}=\oplus_{\alpha\in\Delta} V_\alpha$ (where $\Delta$ denotes the set of eigenvalues of ${\rm Ad}_h$).
We note that $V_1=N(U)\otimes \CC$. Let $\alpha^k=1$ but $\alpha\ne 1$. 
Then in the splitting of
$\frg_c$ under ${\rm Ad}_h^k={\rm Ad}_g$, the generalized eigenspace for $1$ will contain $V_1\oplus V_\alpha$. By minimality of 
$\dim_\CC(V_1)=\dim_\RR(N(U))$, $V_\alpha=0$. By \cite[Lemma 2.1]{C1}, it follows that $h$ is $P_k$-regular.

Conversely, let $h$ be $P_k$-regular and regular. By \cite[Lemma 2.1]{C1}, $V$ is invertible
on $\frg_{\mathbb C}$ and $N(U)\otimes \CC=N(T)\otimes \CC$ and hence $N(T)$ is of minimal dimension.
\end{proof}

We now recall the definition of Cartan subgroup of a Lie group.
Let $G$ be a connected Lie group with its Lie algebra $\frg.$ Let $\mathfrak c$ be the Cartan subalgebra of 
$\frg$ and let $\Delta$ be the set of roots of $\frg_{\mathbb C}$
belonging to $\mathfrak c_{\mathbb C}.$ 
Thus $\frg_{\mathbb C}=\mathfrak c_{\mathbb C}+\Sigma_{\alpha\in\Delta}\frg_{\mathbb C}^{\alpha}.$
The normalizer $N_G(\mathfrak c)$ 
of $\mathfrak c$ is defined by
 $N_G(\mathfrak c):=\{g\in G|{\rm Ad}_g(\mathfrak c)=\mathfrak c\}.$ Note that $N_G(\mathfrak c)$ is a closed subgroup of $G$ with Lie algebra 
 $\mathfrak n_{\frg}(\mathfrak c)=\{X\in\frg|[X,\mathfrak c]\subseteq\mathfrak c\}.$ Let
 $C(\mathfrak c):=\{g\in N_G(\mathfrak c)|\alpha\circ{\rm Ad}_g|_{\mathfrak c_{\mathbb C}}=\alpha, \forall\alpha\in\Delta\}.$ 

 A closed subgroup $C$ of a Lie group $G$ is called a \emph{Cartan subgroup} if its Lie algebra $\mathfrak c$ is a Cartan subalgebra 
 of $\frg$ and $C(\mathfrak c)=C.$ This agrees with the usual definition of a Cartan subgroup (see Appendix of \cite{N}). In particular, a Cartan subgroup is nilpotent.
 
 Let us recall a theorem due to M. W\"ustner (\cite[Theorem 1.9 (ii)]{W}). 
  Let $G$ be a connected Lie group and let $R$ be its solvable radical.
 Then for any Cartan subgroup $C$ of $G$, there is a Levi subgroup $S$ of $G$ such that $C$ 
 can be decomposed as $C=(C\cap S)(C\cap R).$ Moreover, $C\cap S$ is a Cartan subgroup of $S$,
 $C\cap R$ is connected and $C\cap R\subseteq Z_G(S).$
 
 Now we can immediately observe that, if $G$ is connected linear Lie group then any Cartan subgroup can be written as
 $C=TN$ (almost direct product), where $T$ is abelian and $N$ is connected nilpotent subgroup of $C.$ Indeed, if $G$ is linear, so is $S$. This means, $S$ is isomorphic to the identity component (in Euclidean topology) of a geometrically connected semisimple algebraic group defined over $\RR$. In this case, $T=C\cap S$ is abelian, and $N=C\cap R$ is connected nilpotent. This will be used later.
 
 \begin{lemma}\label{A} 
 Let $G$ be a Lie group, and let $Z\subset Z(G)$ be a closed central subgroup. Let $G_1=G/Z$, 
 and let $\pi:G\to G_1$ be the projection. 

(a) If $C$ is a Cartan subgroup of $G$, then $C_1=\pi(C)$ is a Cartan subgroup of $G_1$, and $C=\pi\inv(C_1)$.

(b) Suppose $Z$ is connected. If $P_k(C_1)$ is both closed and open in $C_1$, then $P_k(C)$ is both closed and open in $C$.

\end{lemma}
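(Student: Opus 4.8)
The plan is to deduce both parts from the two facts that the central subalgebra $\mathfrak z=\lie(Z)$ lies inside $\mathfrak c$ and that $Z$ lies inside $C$, so that the whole Cartan-subgroup structure descends cleanly along $\pi$.

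For (a), I would first observe that since $Z\subseteq Z(G)$, its Lie algebra $\mathfrak z$ lies in $\mathfrak z(\frg)$, hence in every Cartan subalgebra; in particular $\mathfrak z\subseteq\mathfrak c$, so $\mathfrak c_1:=d\pi(\mathfrak c)=\mathfrak c/\mathfrak z$ is a Cartan subalgebra of $\frg_1:=\frg/\mathfrak z$. Likewise every $z\in Z$ has ${\rm Ad}_z=1$, so $z\in C(\mathfrak c)=C$; thus $\ker\pi=Z\subseteq C$. The core step is to match the defining conditions of the Cartan subgroup through $\pi$. Complexifying, the roots $\alpha\in\Delta$ vanish on $\mathfrak z_{\mathbb C}$ (as $[\mathfrak z,\frg]=0$), so each $\alpha$ factors as $\alpha=\bar\alpha\circ q$ through the quotient $q:\mathfrak c_{\mathbb C}\to\mathfrak c_{1,\mathbb C}$, giving a bijection $\alpha\leftrightarrow\bar\alpha$ between $\Delta$ and the root set $\Delta_1$ of $\frg_1$. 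Since ${\rm Ad}_g$ preserves $\mathfrak z$ and descends to ${\rm Ad}_{\pi(g)}$, one has $q\circ{\rm Ad}_g|_{\mathfrak c_{\mathbb C}}={\rm Ad}_{\pi(g)}|_{\mathfrak c_{1,\mathbb C}}\circ q$; using surjectivity of $q$ this yields $g\in N_G(\mathfrak c)\iff\pi(g)\in N_{G_1}(\mathfrak c_1)$, and $\alpha\circ{\rm Ad}_g=\alpha$ for all $\alpha$ $\iff$ $\bar\alpha\circ{\rm Ad}_{\pi(g)}=\bar\alpha$ for all $\bar\alpha$. Hence $C(\mathfrak c)=\pi\inv(C(\mathfrak c_1))$, and since $Z\subseteq C(\mathfrak c)$ also $\pi(C(\mathfrak c))=C(\mathfrak c_1)$. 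As $C$ is closed and contains $\ker\pi$, its image $C_1=\pi(C)=C(\mathfrak c_1)$ is a closed subgroup with Lie algebra the Cartan subalgebra $\mathfrak c_1$; thus $C_1$ is a Cartan subgroup of $G_1$ and $C=\pi\inv(C_1)$.

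For (b), part (a) supplies the central extension $1\to Z\to C\stk{\pi}{\to}C_1\to 1$ with $Z\subseteq Z(G)\cap C$. I claim $P_k(C)=\pi\inv(P_k(C_1))$. The inclusion $\subseteq$ is immediate from $\pi(c^k)=\pi(c)^k$. For $\supseteq$, take $c\in C$ with $\pi(c)\in P_k(C_1)$; lifting a $k$-th root of $\pi(c)$ to some $a\in C$ gives $\pi(a^k)=\pi(c)$, so $c=a^k z$ for some $z\in Z$. Here the connectedness of $Z$ enters: a connected abelian Lie group is divisible, so $z=w^k$ for some $w\in Z$, and since $w$ is central in $C$ we get $c=a^kw^k=(aw)^k\in P_k(C)$, proving the claim. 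Finally $\pi$ is continuous, so $\pi\inv$ carries closed sets to closed sets and open sets to open sets; therefore $P_k(C_1)$ being closed and open in $C_1$ forces $P_k(C)=\pi\inv(P_k(C_1))$ to be closed and open in $C$.

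The main obstacle is the bookkeeping in (a): verifying that the root-fixing condition defining $C(\mathfrak c)$ matches the one defining $C(\mathfrak c_1)$ under $\pi$, which rests on the roots killing $\mathfrak z_{\mathbb C}$ and on ${\rm Ad}_g$ descending compatibly. Once (a) is in place, the heart of (b) is the short divisibility-plus-centrality argument that absorbs $Z$ into $P_k(C)$; the transfer of the closed-and-open property is then formal.
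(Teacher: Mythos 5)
Your proof is correct and follows essentially the same route as the paper's: in (a) you match the normalizer and root-fixing conditions defining $C(\mathfrak c)$ and $C(\mathfrak c_1)$ through $\pi$ (using that roots kill $\mathfrak z_{\mathbb C}$ and that ${\rm Ad}$ descends), and in (b) you establish $P_k(C)=\pi\inv(P_k(C_1))$ via divisibility of the connected central $Z$, which is exactly the paper's coset argument made explicit. No gaps.
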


\proof $(a)$ It follows from the description of Cartan subgroup in terms of its adjoint action on the Lie algebra. 
Indeed, if $\mathfrak c$ is a Cartan subalgebra of $\frg$, then $\mathfrak c_1=\mathfrak c/\mathfrak z$ is a 
Cartan subalgebra of $\frg_1=\frg/\mathfrak z$,  
while $\mathfrak z\subset \mathfrak z(\frg)\subset \mathfrak z_\frg(\mathfrak c)$. For any $x\in G$, ${\rm Ad}_x$ preserves
$\mathfrak z(\frg)$, hence $\mathfrak z$. Therefore ${\rm Ad}_{\pi(x)}$ normalizes $\mathfrak c_1$ 
if and only if ${\rm Ad}_x$ normalizes 
$\mathfrak c$. The roots $\overline{\alpha}$ of $\frg_1$ with respect to $\mathfrak c_1$ are induced from roots $\alpha$ of $\frg$ 
with respect to $\mathfrak c$, and $\frg^{\alpha}=\frg_1^{\overline{\alpha}}$. Hence $C_1$ is a Cartan subgroup of $G_1.$

Again, $C\subset \pi\inv(C_1)$. Let $x\in \pi\inv(C_1)$. Since $C_1=\pi(C)$, there is some $y\in C$ such that 
$\pi(x)=\pi(y)$. This means, $x=yz$, for some $z\in Z\subset Z(G)\subset C$, which proves $C=\pi^{-1}(C_1).$

$(b)$ By (a), we get a short exact sequence $1\to Z\to C\stackrel \pi\to C_1\to 1$. 
Now, since $Z$ is a connected abelian group, for any $x\in C$, $xZ\cap P_k(C)$ is nonempty if and only if
$xZ\subset P_k(C)$. Therefore $P_k(C)=\pi\inv (P_k(C_1))$.\hfill$\Box$
\begin{lemma}\label{B}
 Let $C$ be a Lie group and let $\tilde C$ be a cover of $C.$ If $P_k(\tilde C)$ is both open and closed
 then so is $P_k(C).$
\end{lemma}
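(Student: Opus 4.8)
The plan is to exploit the covering homomorphism $p\colon\tilde C\to C$ together with the compatibility of $p$ with the $k$-th power map, thereby reducing the assertion to a statement about preimages. Write $\Gamma=\ker p$, a subgroup of $\tilde C$, and recall that $p$, being a covering homomorphism, is a continuous open surjection, hence a quotient map; in particular a subset $A\subseteq C$ is both open and closed precisely when $p\inv(A)$ is both open and closed in $\tilde C$. Since $p$ is a homomorphism we have $p(\tilde g^{k})=p(\tilde g)^{k}$, so $p(P_k(\tilde C))=P_k(C)$.

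The first step is to identify $p\inv(P_k(C))$ explicitly. I claim that $p\inv(P_k(C))=\Gamma\cdot P_k(\tilde C)=\bigcup_{\gamma\in\Gamma}\gamma P_k(\tilde C)$. For the inclusion $\supseteq$, any element $\tilde g=\gamma\tilde h^{k}$ satisfies $p(\tilde g)=p(\tilde h)^{k}\in P_k(C)$. For $\subseteq$, if $p(\tilde g)=y^{k}$ with $y\in C$, I lift $y$ to some $\tilde y\in\tilde C$; then $p(\tilde y^{k})=p(\tilde g)$, so $\tilde g(\tilde y^{k})\inv\in\ker p=\Gamma$, whence $\tilde g\in\Gamma\cdot P_k(\tilde C)$.

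The second, and main, step is to show that this union is both open and closed in $\tilde C$. This is where the hypothesis is used: a subset of a Lie group that is both open and closed is a union of connected components, i.e.\ a union of cosets of the identity component $\tilde C^{0}$. Left translation by any $\gamma\in\Gamma$ is a homeomorphism permuting the cosets of $\tilde C^{0}$, so each translate $\gamma P_k(\tilde C)$, and hence the union $\bigcup_{\gamma\in\Gamma}\gamma P_k(\tilde C)$, is again a union of connected components of $\tilde C$. Now any union of connected components of a Lie group is the preimage of a subset of the \emph{discrete} component group $\tilde C/\tilde C^{0}$ under the continuous open quotient map, and is therefore automatically both open and closed. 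Thus $p\inv(P_k(C))$ is both open and closed, and since $p$ is a quotient map, $P_k(C)$ is both open and closed in $C$, as required.

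The only delicate point is closedness: $p\inv(P_k(C))$ is a union of possibly infinitely many translates $\gamma P_k(\tilde C)$ of a clopen set, and such an infinite union need not be closed in general. The resolution, and the key observation I expect to be the crux, is that since $\tilde C$ is a Lie group its component group is discrete, so every union of connected components is automatically both open and closed; this is precisely what renders the infinite union harmless.
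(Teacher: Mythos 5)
Your proof is correct, but it travels in the opposite direction from the paper's. The paper pushes forward along the covering map: it writes $P_k(\tilde C)$ as a disjoint union of connected components $\tilde X_i$ of $\tilde C$, proves by an evenly-covered-neighbourhood argument that each image $\pi(\tilde X_i)$ is a connected component of $C$ (openness is free since $\pi$ is open; closedness is the work), and then uses $\pi(P_k(\tilde C))=P_k(C)$ to exhibit $P_k(C)$ as a union of components of $C$, hence clopen. You instead pull back: you identify $p^{-1}(P_k(C))=\Gamma\cdot P_k(\tilde C)$ with $\Gamma=\ker p$, observe that this is a union of connected components of $\tilde C$ (translates of the clopen set $P_k(\tilde C)$ by elements of $\Gamma$), note that in a locally connected space any union of components is automatically clopen, and descend through the quotient-map property of the open surjection $p$. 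Both arguments use the same two ingredients --- the homomorphism property of the covering (the paper needs it for $\pi(P_k(\tilde C))=P_k(C)$, you need it for the kernel-translate identification) and the fact that clopen sets are unions of components --- but your direction buys genuine generality: nowhere do you use discreteness of $\Gamma$ or the evenly-covered structure, so your argument proves the statement for any surjective open continuous homomorphism of Lie groups, not just coverings, whereas the paper's image-based closedness argument really does lean on covering-space theory. Your resolution of the ``infinite union'' worry is also the right one: components of a Lie group are open, so arbitrary unions of components are clopen, and no cardinality issue arises.
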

\begin{proof}
Let $\pi:\tilde C\rightarrow C$ be the covering map. Then $\pi(P_k(\tilde C))=P_k(C).$ By hypothesis, $P_k(\tilde C)$ is disjoint
union of connected components of $\tilde C.$ Let $P_k(\tilde C)=\sqcup_{i\in I}\tilde X_i$, where $\tilde X_i$'s are
connected component of $\tilde C$ and $I$ is the indexing set.
For any $i$, $\pi(\tilde X_i)$ is connected, hence there is a connected component $X_i$ of $C$ such that $\pi(\tilde X_i)\subseteq X_i$. 
We claim that for $i\in I$, $\pi(\tilde X_i)$ is a connected component of $C.$
 Suppose the claim holds, then $\pi(\tilde X_i)=X_i$ and
$\pi(P_k(\tilde C))=\sqcup_{i\in I}\pi(\tilde X)=\sqcup_{i\in I} X_i.$ 
As $X_i$'s are both open and closed in $C$,
$P_k(C)$ is both open and closed. This will prove the lemma.

To prove our claim, we note that, as
$\pi$ is a covering map, $\pi(\tilde X_i)$ is open in $X_i$, therefore it is enough to 
prove that $\pi(\tilde X_i)$ is closed in $X_i.$
Now, let $x$ be an element in the closure of $\pi(\tilde X_i)$. As $\pi$ is covering of a Lie group $C$, there is a \emph{connected}
open set $U$ containing $x$ such that $\pi^{-1}(U)=\sqcup_r V_r$, where $V_r$'s are open in $\tilde X$ and
$\pi$ restricted to $V_r$ is a diffeomorphism onto $U.$ Then, there exists $r_0$, such that $\tilde X_i\cap V_{r_0}\neq \emptyset$. 
Since $V_{r_0}$ is connected, it must lie entirely in the connected component $\tilde X_i$. Therefore $x\in U=\pi( V_{r_0})\subset \pi(\tilde X_i)$.
\end{proof}

 \begin{proposition}\label{cartan pk closed} 
 Let $G$ be any connected real Lie group and let $C$ be a Cartan subgroup of $G$. For any $k\ge 0$, $P_k(C)$ 
 is both open and closed in $C$.
\end{proposition}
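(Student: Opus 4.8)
The plan is to prove the stronger statement that $P_k(C)$ is a \emph{union of connected components} of $C$; since in any Lie group each connected component is both open and closed, this yields the proposition. Writing $C^0$ for the identity component, it suffices to show that $P_k(hC^0)=h^kC^0$ for every $h\in C$, because then $P_k(C)=\bigcup_{h\in C}h^kC^0$ is visibly a union of components. I take $k\ge 1$ (the case $k=1$, where $P_1=\mathrm{id}$, being immediate). The key structural input is that $C$ is nilpotent, which I use through the following unipotence fact.

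Unipotence lemma: for every $h\in C$ the operator $\mathrm{Ad}_h$ is unipotent on $\mathfrak c=\mathrm{Lie}(C^0)$. Let $C=C^{(1)}\supseteq C^{(2)}\supseteq\cdots\supseteq C^{(m+1)}=\{e\}$ be the closed descending central series and $\mathfrak c^{(j)}=\mathrm{Lie}(C^{(j)})$ the induced filtration, with $\mathfrak c^{(m+1)}=0$. For $X\in\mathfrak c^{(j)}$ the curve $t\mapsto h\exp(tX)h^{-1}\exp(-tX)=\exp(t\,\mathrm{Ad}_hX)\exp(-tX)$ lies in the closed subgroup $C^{(j+1)}$, so its derivative at $t=0$, namely $(\mathrm{Ad}_h-1)X$, lies in $\mathfrak c^{(j+1)}$. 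Hence $(\mathrm{Ad}_h-1)\mathfrak c^{(j)}\subseteq\mathfrak c^{(j+1)}$, and iterating gives $(\mathrm{Ad}_h-1)^m\mathfrak c=0$, i.e.\ $\mathrm{Ad}_h$ is unipotent.

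Next I record the effect of $P_k$ on a single component. For $d\in C^0$ a direct rearrangement gives $(hd)^k=h^k\,\Phi(d)$, where, with $\beta:=\mathrm{Int}(h^{-1})|_{C^0}$,
\[\Phi(d)=\beta^{k-1}(d)\,\beta^{k-2}(d)\cdots\beta(d)\,d.\]
Thus $P_k(hC^0)=h^kC^0$ is exactly the surjectivity of this twisted power map $\Phi\colon C^0\to C^0$. I would prove surjectivity by filtering $C^0$ by its descending central series $C^0=G_1\supseteq G_2\supseteq\cdots\supseteq G_{r+1}=\{e\}$, whose successive quotients $G_i/G_{i+1}$ are connected abelian groups, hence of the form $\mathbb R^a\times\mathbb T^b$ and in particular divisible. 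Because $G_i/G_{i+1}$ is central in $C^0/G_{i+1}$, the cross-commutators are absorbed into the next filtration term and $\Phi$ induces on $G_i/G_{i+1}$ the endomorphism $1+\bar\beta+\cdots+\bar\beta^{k-1}$, where $\bar\beta$ is induced by $\beta$. The Lie-algebra map of $\beta$ is $\mathrm{Ad}_{h^{-1}}|_{\mathfrak c}$, unipotent by the lemma, so each $\bar\beta$ is unipotent; writing $\bar\beta=1+(\bar\beta-1)$ with $\bar\beta-1$ nilpotent, the operator $1+\bar\beta+\cdots+\bar\beta^{k-1}$ is $k\cdot\mathrm{id}$ plus a nilpotent operator on the vector part (hence invertible, determinant $k^a\ne0$) and an isogeny of degree $k^b\ne0$ on the toral part (hence surjective). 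A standard successive-approximation argument up the filtration then upgrades surjectivity on each graded piece to surjectivity of $\Phi$ on $C^0$; combined with the per-component formula this gives $P_k(C)=\bigcup_{h}h^kC^0$, a union of components.

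The main obstacle is precisely this surjectivity of $\Phi$: unlike the abelian case one cannot write $(hd)^k=h^kd^k$, so the non-homomorphism $\Phi$ must be controlled level by level along the central series, and the ingredient that makes every graded map surjective is the unipotence of $\mathrm{Ad}_h$. As an alternative one could instead invoke W\"ustner's decomposition $C=(C\cap S)(C\cap R)$ to reduce to a Cartan subgroup $T=C\cap S$ of a semisimple group: there the roots span $(\mathfrak c\cap\mathfrak s)^{*}$ and force $\mathrm{Ad}_t|_{\mathfrak c\cap\mathfrak s}=\mathrm{id}$, so $T^0$ is central and divisible in $T$ and the identity $P_k(tT^0)=t^kT^0$ is immediate, while the connected factor $C\cap R$ contributes nothing since $P_k$ is already onto it.
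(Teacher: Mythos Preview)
Your argument is correct and proves the sharper statement that $P_k(C)$ is a union of $C^0$-cosets. The one point that deserves an extra sentence is the surjectivity of $1+\bar\beta+\cdots+\bar\beta^{k-1}$ on $G_i/G_{i+1}\cong\mathbb R^a\times\mathbb T^b$: the automorphism $\bar\beta$ need not split as a product, but it does preserve the maximal compact torus $\mathbb T^b$ and induces an automorphism of the quotient $\mathbb R^a$; your unipotence then gives surjectivity on both pieces of the resulting short exact sequence $0\to\mathbb T^b\to G_i/G_{i+1}\to\mathbb R^a\to 0$, hence on the middle term. With that clarification the successive-approximation step goes through exactly as you describe.

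Your route is genuinely different from the paper's. The paper never works directly with the nilpotent structure of $C$; instead it performs a cascade of reductions: first kill the connected centre of $G$ repeatedly until $Z(G)$ is discrete, pass to the linear quotient $G/Z(G)$, invoke W\"ustner's decomposition $C=(C\cap S)(C\cap R)$ together with several covering lemmas (their Lemmas~\ref{A} and~\ref{B}) to reduce to a Cartan subgroup of the shape $T\times N$ with $T$ abelian and $N$ simply connected nilpotent, and after further covering-space manoeuvres arrives at a central extension $0\to B\to C\to F\to 0$ with $B$ central and $F$ finite, where the identity $P_k(xB)=x^kP_k(B)$ finishes the job. Your proof bypasses all of this machinery: it uses only that $C$ is nilpotent, never leaves $C$, and needs neither W\"ustner's theorem nor any facts about the ambient group $G$ beyond nilpotency of its Cartan subgroups. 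What the paper's approach buys is a concrete structural picture of $C$ (abelian $\times$ simply connected nilpotent, up to covers) that feeds into later arguments; what your approach buys is a self-contained and considerably shorter proof that would apply verbatim to any nilpotent Lie group in place of $C$. The alternative you sketch at the end---using W\"ustner to make the identity component central---is essentially the paper's idea distilled to its core, so you have in fact identified both proofs.
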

\begin{proof} 
First note that it is enough to consider the case when $Z(G)$ is discrete. Indeed, let $G_1=G/Z(G)^*$, 
where $Z(G)^*$ is the identity component of $Z(G)$, and let $\pi_1:G\to G_1$ be the projection. 
Then by Lemma \ref{A}, it is enough to prove that $P_k(C_1)$ is closed and open 
in $C_1$. Inductively,  $G_{i+1}=G_i/Z(G_i)^*$, and let $C_{i+1}$ be the image of $C_i$ in $G_{i+1}$. 
Since the dimension of $G$ is finite, there is 
some $i$ such that $Z(G_i)$ is discrete. Henceforth, we will assume that $Z=Z(G)$ is discrete.

Note that $G$ is a covering group of the linear group $G'=G/Z$. Let $\pi:G\to G'$ denote the covering projection. 
 Therefore, by Lemma \ref{A}, $\pi(C)$ is a Cartan 
subgroup of $G'$. Then by W\"ustner, there is a Levi subgroup $S$ of $G'$ such that 
$\pi(C)=(\pi(C)\cap S)(\pi(C)\cap R)$ (almost direct product), where $R$ is the solvable radical of $G'.$

Now, write $G'=S\cdot R$ (almost semidirect product). As $S$ acts on $R$, we consider an external semidirect product
of $S$ and $R$, denoted by $S\ltimes R.$
We observe that there is a
surjective group homomorphism $P:S\ltimes R\rightarrow S\cdot R$, whose kernel is isomorphic to
$D=\{(z,z^{-1})|z\in S\cap R\}.$ Since $D$ is discrete normal, it is central in $S\ltimes R.$
Hence by Lemma \ref{A}, $P^{-1}(\pi(C))$ is a Cartan subgroup of $S\ltimes R$ and in fact
$P^{-1}(\pi(C))=(\pi(C)\cap S)\times(\pi(C)\cap R).$ 
As $\pi(C)\cap R$ is connected linear
nilpotent, it can be written as $T_1\times N$ for some torus $T_1$ and
simply connected subgroup $N$ of $\pi(C)\cap R.$ Therefore
$P^{-1}(\pi(C))$ is of the form $T\times N$, where $T=(\pi(C)\cap S)\times T_1.$ Hence
 $P_k(P^{-1}(\pi(C))$ is both open and closed in $P^{-1}(\pi(C))$ implies $P_k(\pi(C))$ is so
 in $\pi(C)$, as $S\ltimes R$ is a finite covering of $G'.$
 Therefore, to prove $P_k(\pi(C))$ is both open and closed, it is enough to show the same for $P_k(P^{-1}(\pi(C))).$

Hence, by Lemma \ref{B}, we may assume that
$\pi(C)=T\times N$, where $T$ is an abelian group and $N$ is a simply connected nilpotent group.
Now, $\pi:C\to T\times N$ is a covering projection. Therefore $C$ has to be of the form $A\times N$,
where $A$ is a covering space of $T$.
Indeed, let $H_1=\pi\inv(T\times \{1\})$ and $H_2=\pi\inv(\{1\}\times N)$, where $\{1\}$ denotes the identity element in $G.$
Then $H_1$, $H_2$ are normal in $C$. Again, $N$ is simply connected, 
so the covering space $H_2\to N$ has a section, while the kernel $Z$ is central. 
Hence $H_2=N\times Z$. Now, let $x\in H_1$. Then $\iota_x$
induces an action on $H_2$ which preserves the central $Z$. Let $(y,z)\in N\times Z$. Then $\iota_x(y,z)=(y',z')$. 
Now, taking projection to $N$, since $x\in H_1$, we see that $y=y'$. Hence the action of $\iota_x$ is given by a matrix 

\[\left(\begin{array}{ll} id_N & \phi\\ \{1\} & id_Z\end{array}\right)\]

\noindent where $\phi$ is a homomorphism $N\to Z$. Now, since $N$ is connected and $Z$ is discrete, $\phi$ is zero, 
hence $H_1$ and $H_2$ commute. Therefore we have a surjective homomorphism of groups $H_1\times H_2\to C$ given by 
inclusions and multiplication in $C$. The kernel is $\{(z,z^{-1})|z\in Z\}$. Since $H_2=N\times Z$, we see that the induced 
homomorphism $H_1\times N\to C$ is an isomorphism. We set, $A=H_1$.

This means, it is enough to prove that $P_k(A)$ is closed in $A$. So we can assume that $C$ is a covering group of $T$ with
central kernel $Z$.

Since $G'$ is linear, $T$ is of the form $F\times T_2$, where $F$ is a finite $\ZZ/2\ZZ$-vector space, 
and $T_2$ is a connected abelian group. Now, in order to prove that $P_k(C)$ is both open and closed in $C$, by Lemma \ref{B},
it is enough to do so after replacing $C$ by a covering group. In particular, we can replace $T_2$ by its simply connected 
covering, which will be a vector space, say $V$, and replace $C$ by the fiber product of $V$ and $C$ over $T_2.$

So we can assume that $C$ is an extension of $F\times V$ by the central $Z$.

Let $p:C\to F$ be the composite $C\stackrel\pi\to {F\times V}\stackrel {Pr_1}\to F.$
Let us consider the normal subgroup $B=\pi\inv(0\times V)$. This is a covering group of the simply connected $V$ by $Z$,
hence is of the form $Z\times V$. Since $V$ is connected abelian and $Z$ is discrete, $B$ is central by similar argument
as above.

Therefore, we can express $C$ as an extension of $F$ by the central $B=Z\times V$ as follows.
\[0\to B\to C\stackrel p\to F\to 0.\]

Let the number of elements in $F$ be $r$, and let $F=\{p(x_1),\ldots,p(x_r)\}$. Now, $C$ is the disjoint union of the 
cosets $x_iB$. Therefore, $P_k(C)$ is the union of $P_k(x_iB)$. Note that, since $B$ is central in $C$, $P_k(xB)=x^kP_k(B)$, and
hence is closed and open in $x^kB$. This implies that $P_k(C)=\cup_{i=1}^r P_k(x_iB)$ is closed and open in $C$. 
\end{proof}

\noindent{\it Proof of Theorem~\ref{pk closed}}: By \cite[Ch. VII, \S 4 n${}^\circ$ 2, Proposition 5]{B},
for each regular $g\in G$ and for each 
identity neighbourhood $W$ there is a neighbourhood $V$ of $g$ such that for each $x\in V$ there is a $w\in W$ with
$C(x)=\iota_w C(g)$ (equivalently, $\cc(x)={\rm Ad}_w\cc(g)$ at the level of Lie subalgebras). 

Let $g\in \regg$ and let $g_n\in\regg$ be a sequence such that $g_n=h_n^k$ and $g$ is the limit of the sequence
i.e. $g_n\to g$ in $G$. Note that $P_k:G\to G$ is nonsingular at identity, because $dP_k$ at identity is 
multiplication by $k$. Hence there are sequences of identity neighbourhoods $W_i$, $W_i'$ such that both the diameters
of $W_i$, $W_i'$ tend to zero, and $W_i=P_k(W_i')$. Now, let $V_i$ be neighbourhoods of $g$ corresponding to $W_i'$. 
Since $g_n\to g$, for each $i$ there is some $g_{n_i}\in V_i$. We can choose $n_{i+1}>n_i$. 
Hence we can assume without loss of generality that $g_i\in V_i$. We can therefore find $\eta_i\in W'_i$ 
such that $\eta_i\inv C(g_i)\eta_i= C(g)$. Since $C(g_i)=C(h_i)$, $\eta_i\inv h_i\eta_i\in C(g)$ for all $i$,
and hence writing $\xi_i=\eta_i^k\in W_i$, $\xi_i\inv g_i\xi_i\in P_k(C(g))$. Since $\xi_i\to 1$ and $g_i\to g$,
$\xi_i\inv g_i\xi_i\to g$. Hence, by closedness of $P_k(C(g))$ (Proposition \ref{cartan pk closed}), 
we get $g\in P_k(C(g))$.\qed

{\centering\section{Characterization of the density of the power map $P_k$}}

In this section we prove Theorem \ref{dense criteria} by using Theorem \ref{pk closed}. 
Also, we deduce many characterization about the dense images of the power map, which can be used in the rest of the paper.

\noindent{\it Proof of Theorem~\ref{dense criteria}}: (a) $\Leftrightarrow$ (b) If $P_k(G)$ is dense in $G$ then 
$P_k(G)\cap \regg$ is dense in $\regg$, hence equal to $\regg$, by Theorem \ref{pk closed}. 
 As $\regg$ is dense in $G$ and it is contained in $P_k(G)$, the converse follows.

(b) $\Leftrightarrow$ (c) Suppose $\regg\subset P_k(G)$. It is enough to show that $C\cap \regg\subset P_k(C)$,
because $C\cap \regg$ is dense in $C$ (see \cite[Proposition 1.6]{H1}), while $P_k(C)$ is closed in $C$
(Proposition \ref{cartan pk closed}). 
If $x\in C\cap \regg$ then by assumption, there is $y\in G$ such that $x=y^k$. This implies $y\in\regg$, 
 by Lemma \ref{P_k regular}. Then $y$ lies in a unique Cartan subgroup, which has to be $C$ because $y^k\in C$. 
Conversely, any regular element $g$ can be found in a unique Cartan subgroup $C$. 
\qed

\begin{remark}\label{algebraic case}
For algebraic groups we can give a more straightforward argument.

(1) Let $G$ be an algebraic group over a field $\mathbb F$. Let $C$ be a Cartan subgroup of $G$. 
Then for $\mathbb F=\RR$ or $\QQ_p$, the regular elements of $G(\mathbb F)$ 
(group of $\mathbb{F}$-rational point of the algebraic group $G$) which can be conjugated to $C(\mathbb F)$ 
form an open subset of $G(\mathbb F)$ in the Euclidean or the ultrametric topology.

To see this, let $\Phi:G\times C\to G: (h,y)\mapsto hyh\inv$ and fixing $y\in C$, 
let $\Phi_y: G\to G$ is given by $\Phi_y(h)=hyh^{-1}.$ Note that the differential 
of $\Phi$ at $(e,y)$, where $y\in \regg\cap C$, is surjective. 
Write $T_yC=dL_y T_eC$, 
\[d\Phi (\xi, dL_y v)=d\Phi(\xi,0)+dL_yv=d\Phi_y(\xi)+dL_yv=dL_y(df(\xi)+v),\]
where $f:G\to G:h\mapsto y\inv h y h\inv$. Note that, $f$ is the composition 
of $G\to G\times G:h\mapsto (y\inv h y, h\inv)$ and $G\times G\stackrel\mu\to G:(x,y)\mapsto xy$. 
Then, 
\[df(\xi)=d\mu({\rm Ad}_y\inv \xi,-\xi)=({\rm Ad}_y\inv-I)\xi.\]
Now, since $y\in C$ is regular, $T_eC=N({\rm Ad}_y\inv -I)$. Therefore we get the following.
\[d\Phi(\frg\times T_yC)=dL_y({\rm im}({\rm Ad}_y\inv-I)+N({\rm Ad}_y\inv -I))=\frg.\]

(2) Now, for $\mathbb F=\QQ_p$, if the group is not anisotropic, there is a nonempty open subset of regular elements 
which can be conjugated to a Cartan subgroup $C$ which contains a copy of $\QQ_p^\times=\ZZ\times \ZZ_p^\times$. 
In this case, for any $k>0$, $P_k(C)$ is proper, open and closed in $C$. Therefore none of (a), (b), (c) of 
Theorem \ref{dense criteria} will hold. On the other hand, if $G$ is anisotropic then it is weakly exponential. 
In this case, density of $P_k$ is equivalent to surjectivity.

(3) For $\mathbb F=\RR$, if the group contains any disconnected Cartan subgroup $C$, then $P_{2k}(C)=C^*$, 
the identity component. Therefore, $P_{2k}$ is never dense. On the other hand, if $P_{2k+1}(C)=C$,
so that $P_{2k+1}(G)$ contains all regular elements, hence is dense. In case all Cartan subgroups are 
connected $G$ is weakly exponential. 
\end{remark}

\noindent{\it Proof of Corollary~\ref{density and w.e}}:
  Suppose that $P_k(G)$ is dense in $G.$ Then by Theorem \ref{dense criteria}, 
   $P_k(G)$ contains a dense open set (namely $\regg$).
  Hence by Baire category theorem, $\cap_{k\ge 1} P_k(G)$ contains a non empty open set and therefore $G$ is weakly exponential.
  Now the converse follows from McCrudden criterion.
 \hfill$\Box$
 
 Theorem \ref{dense criteria} can be used to obtain
the following criteria of weak exponentiality, which is well known (\cite[Theorem I.2]{N}).
\begin{corol}\label{Cartan subgroup}
Let $G$ be a connected Lie group and let $\frg$ be the Lie algebra associated to $G.$ The following are equivalent.
\begin{enumerate}
 \item[\rm (a)] $G$ is weakly exponential.
 \item[\rm (b)] $\regg\subseteq\exp(\frg).$
 \item[\rm (c)] All Cartan subgroups of $G$ are connected.
\end{enumerate}
\end{corol}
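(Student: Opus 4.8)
The plan is to prove the cycle (a) $\Rightarrow$ (c) $\Rightarrow$ (b) $\Rightarrow$ (a), leaning entirely on the power-map machinery already in place. The equivalence of (a) and (c) I would extract from Corollary \ref{density and w.e} together with Theorem \ref{dense criteria}: by the former, $G$ is weakly exponential precisely when $P_k(G)$ is dense for every $k\ge 1$, and by the latter (part (c)) this holds precisely when $P_k(C)=C$ for every Cartan subgroup $C$ and every $k>1$ (the case $k=1$ being automatic). Thus (a) is equivalent to the assertion that every Cartan subgroup $C$ of $G$ is \emph{divisible}, and it remains to match divisibility of $C$ with connectedness of $C$.

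For that matching I would argue as follows. A Cartan subgroup is nilpotent, so its identity component $C^*$ is a connected nilpotent Lie group, on which $\exp$ is surjective; writing $g=\exp(X)=(\exp(X/k))^k$ shows $C^*$ is divisible, which gives the direction ``$C$ connected $\Rightarrow$ $C$ divisible''. For the converse, note that if $q:C\to C/C^*$ is the quotient onto the (abelian) component group, then $q(g^k)=q(g)^k$ gives $P_k(C/C^*)=q(P_k(C))$, so divisibility of $C$ passes to $C/C^*$. Since the component group of a Cartan subgroup is a finitely generated abelian group (a structural property of Cartan subgroups, cf. the Appendix of \cite{N} and the reduction to $T=F\times T_2$ in the proof of Proposition \ref{cartan pk closed}), and a finitely generated abelian group is divisible only when trivial, we conclude $C/C^*=\{1\}$, i.e. $C=C^*$ is connected. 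This establishes (a) $\Leftrightarrow$ (c). (One may also phrase the converse half directly through Corollary \ref{dense criteria 1}, where $P_k$ is shown to be dense exactly when $P_k:C/C^*\to C/C^*$ is surjective.)

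The implication (c) $\Rightarrow$ (b) is then immediate, and it is precisely where connectedness does the real work. Every regular $g$ lies in a unique Cartan subgroup $C(g)$ (the same fact used in the proof of Theorem \ref{dense criteria}); under (c) this $C(g)$ is connected nilpotent, hence equals $\exp(\mathfrak c)$ with $\mathfrak c=\lie(C(g))$. Therefore $g\in\exp(\mathfrak c)\subseteq\exp(\frg)$, proving $\regg\subseteq\exp(\frg)$. Finally (b) $\Rightarrow$ (a) is a one-line density argument: $\regg$ is dense in $G$, so $\regg\subseteq\exp(\frg)$ forces $\exp(\frg)$ to be dense, which is exactly the assertion that $G$ is weakly exponential.

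The step I expect to be the main obstacle is the converse half of the divisibility–connectedness dictionary, namely deducing $C=C^*$ from divisibility of $C$; its entire content lies in the structure of the component group $C/C^*$, which must be known to be finitely generated abelian so that a divisible such group is forced to be trivial. Everything else is standard — the surjectivity of $\exp$ on connected nilpotent groups and the uniqueness of the Cartan subgroup through a regular element — and the two density implications are purely formal once Theorem \ref{dense criteria} and Corollary \ref{density and w.e} are granted.
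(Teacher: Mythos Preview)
Your proof is correct and shares the paper's overall strategy: both reduce (a)\,$\Leftrightarrow$\,(c) via Corollary~\ref{density and w.e} and Theorem~\ref{dense criteria} to the statement that a Cartan subgroup is divisible if and only if it is connected. The differences are in how that last equivalence and the link to (b) are handled. The paper simply cites \cite[Proposition~1]{HL} for ``divisible nilpotent Lie group $\Leftrightarrow$ connected'', whereas you unpack it by hand through the component group $C/C^*$; your argument is correct, though the claim that $C/C^*$ is finitely generated abelian does need the structural input you gesture at (W\"ustner's decomposition plus the linear reduction in Proposition~\ref{cartan pk closed}), so the citation is the cleaner route. For (b), the paper argues (a)\,$\Leftrightarrow$\,(b) in one line via $\regg\subseteq\bigcap_{k\ge 1}P_k(G)\subseteq\exp(\frg)$, using that infinitely divisible elements lie in the exponential image; your route (c)\,$\Rightarrow$\,(b)\,$\Rightarrow$\,(a), using surjectivity of $\exp$ on a connected nilpotent group and density of $\regg$, is equally valid and arguably more self-contained.
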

\begin{proof}
$(a)\Leftrightarrow (b)$ By Corollary \ref{dense criteria},
 $G$ is weakly exponential if and only if $P_k(G)$ is dense for all $k$, which is equivalent to saying that
 $\regg\subseteq\cap_{k\ge 1}P_k(G)\subseteq\exp(\frg)$, by Theorem \ref{dense criteria}.
 
 $(a)\Leftrightarrow (c)$ 
 By using Theorem \ref{dense criteria}, $G$ is weakly exponential
 if and only if all Cartan subgroups of $G$ are divisible (i.e., $P_k(C)=C$ for all $k\ge 1$).
 By \cite[Proposition 1]{HL}, this is equivalent to saying that $C$ is connected. 
\end{proof}

\begin{proposition}\label{density on levi part}
Let $G$ be a connected Lie group, and let $R$ be the radical of $G.$ Let $k>0$ be an integer.
Then $P_k:G\rightarrow G$ is dense if and only if $P_k:G/R\rightarrow G/R$ is dense.
\end{proposition}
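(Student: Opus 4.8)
The plan is to treat the two implications separately, with the forward one being essentially formal and the reverse one carrying all of the content.

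For the forward direction I would use only that $\pi\colon G\to G/R$ is an open, continuous, surjective homomorphism. Since $\pi(g^k)=\pi(g)^k$ and $\pi$ is onto, one has $\pi(P_k(G))=P_k(G/R)$ as subsets of $G/R$. An open continuous surjection carries dense subsets to dense subsets, so if $P_k(G)$ is dense in $G$, then $P_k(G/R)=\pi(P_k(G))$ is dense in $G/R$. No structure theory is needed here.

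For the reverse direction I would invoke criterion (c) of Theorem \ref{dense criteria}: assuming $P_k(\bar C)=\bar C$ for every Cartan subgroup $\bar C$ of $G/R$, it suffices to prove $P_k(C)=C$ for every Cartan subgroup $C$ of $G$. Fix such a $C$. By W\"ustner's theorem there is a Levi subgroup $S$ with $C=(C\cap S)(C\cap R)$, where $C\cap S$ is a Cartan subgroup of $S$ and $C\cap R$ is connected. Since $\ker\pi=R$, we have $\bar C:=\pi(C)=\pi(C\cap S)$; and because $\pi|_S\colon S\to S/(S\cap R)\cong G/R$ is the quotient by the discrete central subgroup $S\cap R$, Lemma \ref{A}(a) shows that $\bar C$ is a Cartan subgroup of $G/R$. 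Moreover $\ker(\pi|_C)=C\cap R$ is connected. This identification of Cartan subgroups across $\pi$ — in particular the connectedness of $C\cap R$ supplied by W\"ustner — is the step I expect to require the most care, since the final argument rests entirely on it.

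The concluding step passes to component groups. As $C\cap R$ is connected it lies in the identity component $C^*$, and a dimension count gives $\pi(C^*)=\bar C^*$, so $\pi$ induces an isomorphism $C/C^*\cong \bar C/\bar C^*$ intertwining the maps induced by $P_k$. By Proposition \ref{cartan pk closed}, $P_k(C)$ is open and closed in $C$, hence a union of cosets of $C^*$; consequently $P_k(C)=C$ if and only if the induced map $P_k\colon C/C^*\to C/C^*$ is surjective, and likewise for $\bar C$. Since $P_k(\bar C)=\bar C$ gives surjectivity of $P_k$ on $\bar C/\bar C^*$, the isomorphism above transports it to surjectivity of $P_k$ on $C/C^*$, whence $P_k(C)=C$. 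As $C$ was an arbitrary Cartan subgroup of $G$, Theorem \ref{dense criteria} yields density of $P_k(G)$, completing the reverse implication.
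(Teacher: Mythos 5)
Your proof is correct, and it is not circular: everything you invoke (W\"ustner's decomposition, Lemma \ref{A}, Proposition \ref{cartan pk closed}, Theorem \ref{dense criteria}) is established in the paper before this proposition. However, your reverse implication runs along a genuinely different track than the paper's. Both arguments reduce, via Theorem \ref{dense criteria}(c), to showing $P_k(C)=C$ for an arbitrary Cartan subgroup $C$ of $G$, and both start from W\"ustner's decomposition $C=(C\cap S)(C\cap R)$ with $C\cap R$ connected. From there the paper constructs $k$-th roots explicitly: it passes to the external semidirect product $S\ltimes R$, which covers $G$ with kernel $\Delta(Z)$, $Z=S\cap R$, identifies the Cartan subgroup upstairs as a direct product $(C\cap S)\times(C\cap R)$, writes an arbitrary element of $C$ as $\overline{x_1x_2}$, lifts $x_1$ to a $k$-th power modulo $Z$ using the hypothesis on $G/R$, absorbs the resulting central error term together with $x_2$ into a $k$-th power inside $C\cap R$ (using that a connected nilpotent Lie group is divisible), and multiplies the two roots using that $C\cap S$ and $C\cap R$ commute. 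You avoid the covering group and the explicit root construction entirely: you observe that $\pi|_C$ has connected kernel $C\cap R\subseteq C^*$, deduce the isomorphism $C/C^*\cong\bar C/\bar C^*$ of component groups, and then use Proposition \ref{cartan pk closed} (so that $P_k(C)$ is a union of cosets of $C^*$) to convert surjectivity of $P_k$ on $C/C^*$ into $P_k(C)=C$. In effect you prove directly the component-group criterion that the paper records only afterwards as Corollary \ref{dense criteria 1}, and whose proof in the paper is deduced \emph{from} this proposition; your route inverts that logical order, which is perfectly legitimate. What each approach buys: the paper's computation is hands-on and uses only divisibility of connected nilpotent groups plus commutation of the two W\"ustner factors, whereas yours is shorter and more formal, at the cost of leaning on the clopen-image statement of Proposition \ref{cartan pk closed} --- though since Theorem \ref{dense criteria} itself already rests on that proposition, no genuinely heavier tool is being introduced. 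The one step you flagged as delicate, the identification of $\bar C=\pi(C\cap S)$ as a Cartan subgroup of $G/R\cong S/(S\cap R)$ via Lemma \ref{A}(a) and the equality $\pi(C^*)=\bar C^*$, does go through: $S\cap R$ is discrete, hence closed and central in the connected group $S$, and $d\pi(\mathfrak c)=d\pi(\mathfrak c\cap\mathfrak s)$ is exactly the Lie algebra of $\bar C$, so $\pi(C^*)$ is the analytic subgroup $\bar C^*$.
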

 
\begin{proof}
The right implication is obvious.
For the converse, let $H$ be a Cartan subgroup of $G.$ Then there exists a Levi subgroup $S$
of $G$ such that $H=(H\cap S)(H\cap R).$
 We write, $G=S\cdot R$.
 Let $Z=S\cap R$ and $\Delta(Z):=\{(z,z^{-1})|z\in Z\}.$ Now, consider $S\ltimes R$ as an external semidirect product of $S$ and $R.$ 
 The map $\pi: S\ltimes R\rightarrow S\cdot R$ sending $(s,r)\to s\cdot r$, for $s\in S$, $r\in R$, is a covering map and
 $G$ can be thought of as a quotient of $S\ltimes R$ by $\Delta(Z).$  
  Then Cartan subgroup $\pi^{-1}(H)$ of $S\ltimes R$ is of the form
 $C\times N$ where $C=H\cap S$ is a Cartan subgroup of $S$, $N=H\cap R$ is connected nilpotent subgroup of $R$, and $H=(C\times N)/\Delta(Z)$.
  Also, note that as $S\rightarrow S/Z$ is covering, then $C/Z$ is a Cartan subgroup of $G/R.$
 Now, there is a short exact sequence of groups \[1\to N\to (C\times N)/\Delta(Z)\to C/Z\to 1.\]
 As $P_k(G/R)$ is dense in $G/R$, by Theorem \ref{dense criteria}, $P_k(C/Z)=C/Z.$ Now, let $x=\overline{x_1x_2}\in (C\times N)/\Delta(Z)$ for some
 $x_1\in C$ and $x_2\in N.$ As $P_k(C/Z)=C/Z$, there exists $y_1\in C$ and $z\in Z$ such that $x_1=y_1^kz.$
 Thus $\overline{x_1x_2}=\overline{y_1^kzx_2}=\overline{y_1^ky_2^k}$ in $H$, for some $y_2\in N$,
 as $z$ is contained in $N$ and $N$ is connected nilpotent Lie group. Now we notice that
 $\overline{x_1x_2}=\overline{y_1y_2}^k$, as $C$ and $N$ commute. 
 Hence the power map $P_k:H\to H$ is surjective. Therefore,
 by applying Theorem \ref{dense criteria}, the proposition follows.
 \end{proof}

 To deduce Corollary \ref{dense criteria 1}, we need next two lemmas.
  
 \begin{lemma}\label{abelian Cartan}
 Let $\pi:G\to H$ be a covering of connected Lie groups, with discrete kernel $D$. Let $C\subset H$ be a connected Cartan subgroup which is abelian. Then the corresponding Cartan subgroup $M=\pi\inv(C)$ is also abelian.
\end{lemma}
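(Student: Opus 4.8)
The plan is to exploit the fact that $\pi$ realizes $M$ as a central extension of the abelian group $C$ by a discrete group, and then to use a connectedness argument to kill the commutators. First I would record the basic structure. Since $\pi\colon G\to H$ is a covering of connected Lie groups, its kernel $D$ is a discrete normal subgroup of the connected group $G$, and is therefore central: for each $d\in D$ the continuous map $g\mapsto gdg\inv$ sends the connected group $G$ into the discrete set $D$, hence is constant, equal to $d$. As $1\in C$ we have $D\subseteq\pi\inv(C)=M$, and $M$ is closed in $G$ because $C$ is closed in $H$. Restricting $\pi$ gives a short exact sequence
\[1\to D\to M\stackrel{\pi}{\to} C\to 1\]
with $D$ central in $M$. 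Since $C$ is abelian, for any $x,y\in M$ the commutator $[x,y]=xyx\inv y\inv$ satisfies $\pi([x,y])=[\pi(x),\pi(y)]=1$, so that $[x,y]\in D$.

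Next I would reduce to the identity component. Let $M^\circ$ denote the identity component of $M$. Its image $\pi(M^\circ)$ is an open subgroup of the connected group $C$, hence all of $C$; therefore every $m\in M$ can be written $m=m_0 d$ with $m_0\in M^\circ$ and $d\in D$, i.e. $M=M^\circ D$. Because $D$ is central, for $x=x_0d_1$ and $y=y_0d_2$ (with $x_0,y_0\in M^\circ$ and $d_1,d_2\in D$) the central factors cancel and $[x,y]=[x_0,y_0]$. Consequently it suffices to prove that $M^\circ$ is abelian.

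Finally I would apply the connectedness principle. The commutator map $c\colon M^\circ\times M^\circ\to G$, $(x_0,y_0)\mapsto[x_0,y_0]$, is continuous and, by the first step, takes values in the discrete subgroup $D$. Since $M^\circ\times M^\circ$ is connected and $c(1,1)=1$, the image $c(M^\circ\times M^\circ)$ is a connected subset of $D$ containing $1$, hence equals $\{1\}$. Thus $M^\circ$ is abelian, and therefore so is $M$. The only real subtlety here is that $M=\pi\inv(C)$ need not be connected even though $C$ is; the point that makes the argument go through is that discreteness of $D$ forces it to be central, which both licenses the reduction to $M^\circ$ and supplies the discrete target group for the connectedness argument. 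Note that the nilpotency or Cartan-subgroup structure of $M$ plays no role: only the central extension and connectedness of the fibres of the base are used.
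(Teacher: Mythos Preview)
Your proof is correct and follows essentially the same route as the paper's: both reduce to showing $M^\circ$ is abelian via the decomposition $M=M^\circ D$ with $D$ central, and then conclude since $M$ is a quotient of $M^\circ\times D$. The only cosmetic difference is that the paper infers $M^\circ$ abelian from the fact that it is a connected cover of the abelian group $C$ (hence has abelian Lie algebra), whereas you argue directly that the commutator map $M^\circ\times M^\circ\to D$ is continuous with connected source and discrete target; both arguments are standard and equivalent here.
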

\begin{proof}
 Note that the identity component $M^*$ is a covering of $C$. Since $C$ is connected abelian, $M^*$ is abelian.
 Any element $x\in M$ is 
 of the form $x=mz$ for some $m\in M^*$ and $z\in D.$ Now we note that, the group homomorphism $M^*\times D\to M$,
 given by $(m',z')\to m'z'$, is surjective. Hence $M$ is abelian.
\end{proof}
 \begin{lemma}\label{C^* is central}
  Let $\tilde G$ be a connected Lie group with discrete center. Let $\tilde C$ be any Cartan subgroup of $\tilde G.$
  Then the connected component $\tilde C^*$ of $\tilde C$ is central. 
 \end{lemma}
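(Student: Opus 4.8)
The plan is to prove the equivalent infinitesimal statement that ${\rm Ad}_x$ acts as the identity on $\mathfrak c$ for every $x\in\tilde C$. Since $\tilde C^{*}$ is the connected subgroup with Lie algebra $\mathfrak c$, and a self-map of a \emph{connected} Lie group coming from conjugation is trivial exactly when its differential is, an element $x\in\tilde C$ centralizes $\tilde C^{*}$ if and only if ${\rm Ad}_x|_{\mathfrak c}={\rm id}$. So the assertion ``$\tilde C^{*}$ is central in $\tilde C$'' is precisely the claim that ${\rm Ad}_x|_{\mathfrak c}={\rm id}$ for all $x\in\tilde C$. I note at the outset that $\tilde C^{*}$ itself induces the full group ${\rm Inn}(\mathfrak c)$ of inner automorphisms on $\mathfrak c$, so this target statement already encodes two things: that $\mathfrak c$ is abelian (otherwise $\tilde C^{*}$ moves $\mathfrak c$ nontrivially and cannot be central) and that the remaining components of $\tilde C$ act trivially on $\mathfrak c$.

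First I would unpack $\tilde C=C(\mathfrak c)$: each $x\in\tilde C$ satisfies ${\rm Ad}_x(\mathfrak c)=\mathfrak c$ and $\alpha\circ({\rm Ad}_x|_{\mathfrak c})=\alpha$ for every root $\alpha\in\Delta$. Writing $\psi={\rm Ad}_x|_{\mathfrak c}$, the root condition reads $(\psi-{\rm id})(\mathfrak c)\subseteq\bigcap_{\alpha\in\Delta}\ker\alpha$, so the whole problem is to show that this common kernel of the roots inside $\mathfrak c$ cannot support a nontrivial motion. This is where the hypothesis enters: as $\tilde G$ is connected, ${\rm Lie}\,Z(\tilde G)=\mathfrak z(\frg)$, so a discrete center means $\mathfrak z(\frg)=0$. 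To organize the argument I would apply W\"ustner's decomposition $\tilde C=(\tilde C\cap S)(\tilde C\cap R)$, with $\tilde C\cap S$ a Cartan subgroup of the semisimple Levi $S$ and $\tilde C\cap R$ connected and contained in $Z_{\tilde G}(S)$; at the Lie algebra level $\mathfrak c=\mathfrak t\oplus\mathfrak c_r$, where $\mathfrak t$ is a Cartan subalgebra of $\mathfrak s$ and $\mathfrak c_r\subseteq\mathfrak z_{\frg}(\mathfrak s)$. On the semisimple part the roots of $\mathfrak s$ span $\mathfrak t^{*}_{\mathbb C}$, so the root condition forces $\psi|_{\mathfrak t}={\rm id}$ immediately, and $\mathfrak t$ is abelian; moreover $\tilde C\cap R$, being connected and commuting with $S$, is automatically centralized by the $S$-directions.

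The main obstacle is the radical direction $\mathfrak c_r$. Here the roots generally do \emph{not} span, so the root condition alone gives no control, and one must instead use $\mathfrak z(\frg)=0$ together with the fine generalized weight-space structure of $\frg$ under $\mathfrak c$. The delicate point is that for $H$ in the common kernel of all roots the operator ${\rm ad}_H$ is only \emph{nilpotent} on each weight space, not obviously zero, so the centrality of $\tilde C^{*}$ forces exactly that these root-invisible radical directions of $\mathfrak c$ be abelian and acted on trivially; establishing this from $\mathfrak z(\frg)=0$, rather than from the roots, is the crux of the proof and the step I expect to require the most care. Once $\psi={\rm id}$ is obtained on all of $\mathfrak c$ for every $x\in\tilde C$, each such $x$ centralizes $\tilde C^{*}$, giving $\tilde C^{*}\subseteq Z(\tilde C)$, which is the assertion.
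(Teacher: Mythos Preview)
Your route differs substantially from the paper's. The paper argues topologically: it passes to the linear quotient $G=\tilde G/Z(\tilde G)$, writes the image Cartan subgroup as $C=A\times F$ with $A$ connected and $F$ a finite $\ZZ/2\ZZ$-vector space, observes that the discrete normal subgroup $\pi^{-1}(F)\subset\tilde C$ must be centralized by the connected $\tilde C^*$, and then notes that $\tilde C^*$ together with $\pi^{-1}(F)$ generates $\tilde C$. No root-space analysis enters.

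The step you flag as ``the crux'' is not merely delicate: it is where your plan, and in fact the lemma as literally stated, breaks down. Your target ${\rm Ad}_x|_{\mathfrak c}={\rm id}$ for all $x\in\tilde C^*$ would force $\mathfrak c$ to be abelian, but $\mathfrak z(\frg)=0$ does not imply this. Take $\frg=\mathfrak h_3\ltimes\RR^3$, where $\mathfrak h_3=\langle X,Y,Z:[X,Y]=Z\rangle$ acts via $X\mapsto I+E_{12}$, $Y\mapsto E_{23}$, $Z\mapsto E_{13}$. One checks that $\mathfrak z(\frg)=0$ and that $\mathfrak c=\mathfrak h_3$ is a non-abelian Cartan subalgebra; the simply connected group has trivial center and Cartan subgroup equal to the Heisenberg group $H_3$, connected but not abelian, so $\tilde C^*=\tilde C$ cannot be central in itself. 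Your infinitesimal reduction is therefore equivalent to a claim that is false in this generality, so no amount of care with the weight-space structure will close the gap. The saving point is that in the paper the lemma is only invoked for the semisimple quotient $G/R$, where $\mathfrak c_r=0$; there Cartan subalgebras are abelian, the paper's decomposition $C=A\times F$ genuinely has $A$ abelian, and your root-span argument on $\mathfrak t$ already finishes the job with no radical obstruction left to address.
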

\begin{proof}
 Let $G=\tilde G/Z(\tilde G)$ and $C:=\pi(\tilde C)$ be the corresponding Cartan subgroup of $G.$
 We can write $C=A\times F$, where $A$ is connected and
 $F$ is a $\mathbb{Z}/2\mathbb{Z}$ vector space. If $F$ is trivial, by Lemma \ref{abelian Cartan}, $\tilde C$ is abelian and hence
 the conclusion holds. Suppose that $F$ is not trivial. We observe that $\pi^{-1}(F)$ is a
 normal subgroup of $\tilde C$, where $\pi:\tilde C\rightarrow C$ is the natural projection map. Therefore, $\tilde C^*$
 acts trivially on $\pi^{-1}(F)$, as $\tilde C^*$ is connected and $\pi^{-1}(F)$ is discrete. Thus elements of 
 $\tilde C^*$ and $\pi^{-1}(F)$ commute each other. Note that $\tilde C$ is a quotient of $\tilde C^*$ and $\pi^{-1}(F).$
 Indeed, for $x\in\tilde C$, let $\pi(x)=(a,b)\in A\times F.$ We can choose $x_1\in\tilde C^*$ such that
 $x_1=\pi^{-1}(a).$ Then it follows that $xx_1^{-1}\in\pi^{-1}(F).$  Thus $x=(xx_1^{-1})x_1\in\pi^{-1}(F)\times\tilde C^*$
 and hence $\tilde C^*$ is central in $\tilde C.$
 \end{proof}
\noindent{\it Proof of Corollary~\ref{dense criteria 1}}:
 $(:\Rightarrow)$ This follows from Theorem \ref{dense criteria}.
 
 $(\Leftarrow:)$ Let $R$ be the solvable radical of $G.$ We recall that,
  there is a decomposition of the Cartan subgroup $C$ (\cite[Theorem 1.9 (ii)]{W}).
  For $C$, there is a Levi subgroup $S$ of $G$ such that
  $C=C_s\cdot N$ (almost direct
 product), where $C_s=C\cap S$ is a Cartan subgroup of $S$ and $N=C\cap R$ is connected nilpotent contained
 in $Z_G(S).$ Therefore we observe that $C/C^*\simeq C_s/C_s^*Z$, where $Z=C_s\cap N$, a central subgroup. Also, we note
 that if $C_1$ is a Cartan subgroup of $G/R$, then $C_1/C_1^*\simeq C_s/C_s^*Z.$ Hence $P_k:C/C^*\rightarrow C/C^*$
 is surjective if and only if $P_k:C_1/C_1^*\rightarrow C_1/C_1^*$ is surjective. Applying Lemma \ref{C^* is central}
 for the semisimple group $G/R$,
 we get $P_k:C_1\rightarrow C_1$ is surjective and hence by Theorem \ref{dense criteria}, $P_k:G/R\rightarrow G/R$ is dense.
 Now by Proposition \ref{density on levi part}, we conclude that $P_k(G)$ is dense in $G.$
\hfill$\Box$

\section{Density of the power maps on simple Lie groups}

In this section we study the dense images of the power maps in simple Lie groups. We use results in \S 3, 
to determine the integer $k$, for which $P_k$ has dense images.

 We recall from Remark \ref{algebraic case} that, for a connected real algebraic group $G$ with disconnected Cartan subgroup, 
 $P_k$ is dense if and only if $k$ is odd.
 However, in general this is not true.
 
 \begin{example}
  Let $\tilde G=\widetilde{\rm SL(2,\mathbb{R})}$ be the simply connected cover of $\rm SL(2,\mathbb{R}).$ Then
  $P_k:\widetilde{\rm SL(2,\mathbb{R})}\rightarrow\widetilde{\rm SL(2,\mathbb{R})}$ is not dense for all $k>1.$ 
  Indeed, the center of $\tilde G$ is infinite cyclic, and the adjoint group $\rm PSL(2,\RR)$ has a Cartan 
  subgroup isomorphic to $\RR$. The corresponding Cartan subgroup in $\tilde G$ is therefore isomorphic
  to $\RR\times\ZZ$, where the component $\ZZ$ is contributed by the center. Therefore for $k>1$, $P_k$ is never 
  surjective on this Cartan subgroup, so the assertion follows by Theorem \ref{dense criteria}.
 \end{example}

There is a list of weakly exponential non-compact non-complex simple Lie algebras in Neeb (\cite{N}). 
Neeb also mentioned exactly which in the list are completely weakly exponential (i.e., whose corresponding simply connected group is weakly exponential).
  Thus recalling Borel's criterion about weak exponentiality,
  one can understand the connectedness of the Cartan subgroup.

 For a connected Lie group $G$, if Cartan subgroups are connected, then $P_k$ is dense in $G$ for all $k$, as $G$
 is weakly exponential.
 Therefore we concentrate the case when Cartan subgroups are disconnected. As we noted before 
 (Proposition \ref{density on levi part}), the study of density boils down 
 to semisimple Lie group, at this moment
 we now investigate this phenomenon for simple Lie groups. 
 
 Let us fix some notations for this section.
 Let $\tilde G$ be the universal cover of the connected simple adjoint Lie group $G.$ 
 Let $\pi$ denote the covering map and $\tilde C$ be a Cartan
 subgroup of $\tilde G.$ Then let $C=\pi(\tilde C)$ be the corresponding Cartan subgroup of $G.$
 Let us denote the fundamental group of $G$ by $\pi_1(G).$ So
 we get a natural short exact sequence
  \[1\to\pi_1(G)\to \tilde C\to C\to 1.\]
  
  For a connected simple adjoint Lie group $G$, we notice that its fundamental group is finitely generated,
  whose possible factors are isomorphic to $\mathbb{Z}$,
  $\mathbb{Z}/2\mathbb{Z}$, $\ZZ/4\ZZ$, $\ZZ/8\ZZ$, or 
  $\mathbb{Z}/3\mathbb{Z}$. We discuss some cases. In the following cases we assume that some Cartan subgroup of $\tilde G$,
  say $\tilde C$ is disconnected.

\noindent {\bf Case 1.} 
 Let $\pi_1(G)\simeq\mathbb{Z}_2$ or non cyclic group of order $4.$

This happens for $A_n I$ ($n\ge 2$), $B_n I$, $E^6_6 I$, $E_8^8 VIII$, $F_4^4 I$, $G_2^2 I$, $C_n II$, $E_7^{-5} VI$, $E_8^{-24} IX$, $F_4^{-22} II$. Note that none of these is weakly exponential.

\begin{lemma}\label{P1}
  
  In Case 1, $P_k:\tilde G\rightarrow\tilde G$ is dense if and only if $k$ is odd.
 \end{lemma}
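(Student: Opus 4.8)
The plan is to apply Theorem~\ref{dense criteria}, which reduces the density of $P_k$ on $\tilde G$ to the condition $P_k(\tilde C)=\tilde C$ for every Cartan subgroup $\tilde C$ of $\tilde G$. Since $\tilde G$ is simple, all Cartan subgroups are conjugate, so it suffices to analyze a single disconnected $\tilde C$. First I would record the short exact sequence $1\to\pi_1(G)\to\tilde C\to C\to 1$, where $C$ is the corresponding Cartan subgroup of the adjoint group $G$. Because $G$ is adjoint (centerless) and real algebraic, $C$ is a disconnected Cartan subgroup whose component group is a $\ZZ/2\ZZ$-vector space, and the identity component $C^*$ is abelian and divisible; thus $P_k(C^*)=C^*$ for all $k$, and all the arithmetic obstruction lives in the finite $2$-group $C/C^*$.

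Next I would analyze $\tilde C$ itself. By Lemma~\ref{C^* is central}, since $\tilde G$ has discrete center, the identity component $\tilde C^*$ is central in $\tilde C$, so $\tilde C$ is a central extension of the finite component group by the connected abelian (hence divisible) group $\tilde C^*$. Consequently $P_k(\tilde C)=\tilde C$ is equivalent to surjectivity of $P_k$ on the finite component group $\tilde C/\tilde C^*$, which I claim is again an elementary abelian $2$-group in Case~1. The key input is the hypothesis $\pi_1(G)\simeq\ZZ_2$ or the non-cyclic group of order $4$: combining the exact sequence above with the structure $C=C^*\times F$ ($F$ a $\ZZ/2\ZZ$-vector space) via the snake lemma on component groups shows that $\tilde C/\tilde C^*$ is built from $F$ and the $2$-torsion of $\pi_1(G)$, hence is itself a finite $\ZZ/2\ZZ$-vector space, and by assumption it is nontrivial (as $\tilde C$ is disconnected).

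The conclusion then follows from a parity computation on $\tilde C/\tilde C^*$. On a nontrivial $\ZZ/2\ZZ$-vector space $V$, the map $P_k$ is the identity when $k$ is odd (so it is surjective) and is the zero map when $k$ is even (so its image is trivial, hence not all of $V$). Therefore $P_k(\tilde C/\tilde C^*)=\tilde C/\tilde C^*$ if and only if $k$ is odd, which by the reduction above gives $P_k(\tilde C)=\tilde C$ iff $k$ is odd, and finally by Theorem~\ref{dense criteria} that $P_k(\tilde G)$ is dense iff $k$ is odd.

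I expect the main obstacle to be the second paragraph: verifying that the component group $\tilde C/\tilde C^*$ is a $\ZZ/2\ZZ$-vector space (and is nontrivial precisely under the stated disconnectedness hypothesis). This requires tracking how $\pi_1(G)$ interacts with the component group of $C$ through the covering $\tilde C\to C$, using that the $\ZZ/4\ZZ$ or $\ZZ/3\ZZ$ factors are \emph{excluded} in Case~1 so that no odd torsion or higher $2$-power torsion can disturb the elementary-abelian structure; the divisibility of the connected parts $C^*$ and $\tilde C^*$ is what guarantees the obstruction is confined to this finite quotient. Once the component group is identified as a nontrivial elementary abelian $2$-group, the parity argument is immediate.
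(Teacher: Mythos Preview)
Your overall strategy---reduce via Theorem~\ref{dense criteria} (or Corollary~\ref{dense criteria 1}) to the component group $\tilde C/\tilde C^*$ and then do a parity computation---is reasonable, but there are two genuine gaps.

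First, the assertion ``since $\tilde G$ is simple, all Cartan subgroups are conjugate'' is false for real Lie groups: a non-compact real simple group typically has several conjugacy classes of Cartan subgroups (already $\mathrm{SL}(2,\mathbb{R})$ has a split and a compact one). To prove density via Theorem~\ref{dense criteria} you must verify $P_k(\tilde C)=\tilde C$ for \emph{every} Cartan subgroup $\tilde C$, not just the fixed disconnected one. Fortunately your argument does not actually use conjugacy: the analysis of $\tilde C/\tilde C^*$ applies verbatim to each $\tilde C$, and the disconnected one is only needed for the ``only if'' direction. Second, an extension of $\mathbb{Z}/2\mathbb{Z}$-vector spaces need not itself be a $\mathbb{Z}/2\mathbb{Z}$-vector space (witness $\mathbb{Z}/4\mathbb{Z}$ or $Q_8$), so your snake-lemma paragraph does not establish that $\tilde C/\tilde C^*$ is elementary abelian. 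What the exact sequence $1\to \pi_1(G)/(\pi_1(G)\cap\tilde C^*)\to \tilde C/\tilde C^*\to C/C^*\to 1$ actually gives is that $\tilde C/\tilde C^*$ is a finite $2$-group, and this weaker statement suffices: on any finite $2$-group $H$, the map $P_k$ is bijective for $k$ odd (choose $m$ with $km\equiv 1\pmod{|H|}$, then $P_m\circ P_k=\mathrm{id}$), and is never surjective for $k$ even and $H$ nontrivial (since $P_k(H)\subseteq P_2(H)$ and $P_2$ is not injective, hence not surjective, on a finite set containing an element of order~$2$).

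By contrast, the paper avoids both issues by working directly with the central extension $1\to\pi_1(G)\to\tilde C\to C\to 1$: for odd $k$, $P_k$ is surjective on $C$ (a linear adjoint Cartan, hence connected abelian times a $(\mathbb{Z}/2)^m$) and on $\pi_1(G)$ (elementary abelian $2$-group in Case~1), hence on $\tilde C$; for even $k$ it uses the disconnected $\tilde C$ from the standing hypothesis, treating separately the cases where the image $C$ is disconnected (immediate) or connected (indirect: if $P_2$ were dense then together with the odd case all $P_k$ would be, forcing $\tilde C$ connected). Once your two gaps are repaired, your route through $\tilde C/\tilde C^*$ is arguably cleaner for the even case than the paper's indirect argument.
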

 \begin{proof}
 First assume that the Cartan subgroup $C=\pi(\tilde C)$ of $G$ is disconnected.
  By hypothesis, it is enough to check for odd $k$, as for even $k$, $P_k(G)$ is not dense in $G.$ Note that
  $\pi_1(G)$ is central in $\tilde C.$ Since for $k$ odd, $P_k$ is surjective on $C$ and $\pi_1(G)$ as well,
  $P_k:\tilde C\to\tilde C$ is surjective and hence by Theorem \ref{dense criteria}, the
  lemma follows.
  
  Now, suppose that $C$ is connected. Using the same argument as before, for odd $k$, $P_k$ is dense in $\tilde G.$ Now Suppose that
  $P_2$ is dense in $\tilde G.$ Then $P_k$ is dense in $\tilde G$ for all $k$, which further implies that $P_k(\tilde C)=\tilde C$
  for all $k$, by Theorem \ref{dense criteria}. Thus by \cite[Proposition 1]{HL}, $\tilde C$ is connected, which is a contradiction.
  This implies that, for even $k$, $P_k(\tilde G)$ is not dense in $\tilde G$, as desired.
 \end{proof}

\noindent {\bf Case 2(a).}
 $C$ is connected, and $\pi_1(G)\simeq\mathbb{Z}.$
This occurs for $E_7^{-25}VII$, $A_1 I$.

As $C$ is connected, we get the following short exact sequence
\[1\to \tilde C^*\cap\mathbb{Z}\to \tilde C^*\to C\to 1.\] Suppose that $\tilde C^*\cap\mathbb{Z}$ is the subgroup
$n\mathbb{Z}$ of $\mathbb{Z}.$ Thus we have $\tilde C/\tilde C^*\simeq\mathbb{Z}/n\mathbb{Z}.$ 

 In this case, $P_k:\tilde G\rightarrow\tilde G$ is dense if and only if $k$ is coprime to $n.$

 Indeed, as $P_k:\mathbb{Z}/n\mathbb{Z}\rightarrow\mathbb{Z}/n\mathbb{Z}$ is surjective if and only if $k$ is coprime to $n$,
 the result follows from Theorem \ref{dense criteria 1}.

\vskip 1em
\noindent {\bf Case 2(b).}
$C$ is disconnected, and  $\pi_1(G)\simeq\mathbb{Z}.$
Then there is a surjective group homomorphism $\tilde C/\tilde C^*\to C/C^*$. Let $D={\rm Ker}(\tilde C/\tilde C^*\to C/C^*).$ 
As $\pi_1(G)$ is infinite cyclic, $D$ is isomorphic to
$\mathbb{Z}/n\mathbb{Z}$ for some $n.$

This happens for $A_1 I$, $C_n I$ ($n\ge 3$), $D_nIII$ ($n\ge 4$ even).

\begin{lemma}\label{P4}
  In case 2(b), $P_k:\tilde G\rightarrow\tilde G$ is dense if and only if $k$ is 
  odd and coprime to $n.$
 \end{lemma}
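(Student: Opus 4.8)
The plan is to reduce, via Corollary~\ref{dense criteria 1}, to a purely finite–group–theoretic statement about the component group $Q:=\tilde C/\tilde C^*$, and then apply an elementary criterion for surjectivity of the power map on a finite group. First I would invoke Corollary~\ref{dense criteria 1} for $\tilde G$: the map $P_k:\tilde G\to\tilde G$ is dense if and only if $P_k:\tilde C/\tilde C^*\to\tilde C/\tilde C^*$ is surjective. So it suffices to understand the finite group $Q:=\tilde C/\tilde C^*$. From the short exact sequence set up just above, $1\to D\to Q\to C/C^*\to 1$ with $D\cong\mathbb Z/n\mathbb Z$, I would use that $G$ is the linear adjoint group, so by the structure of Cartan subgroups of connected linear Lie groups recalled in \S2 (the reduction in the proof of Proposition~\ref{cartan pk closed}), the component group $C/C^*$ is an elementary abelian $2$-group, say $C/C^*\cong(\mathbb Z/2\mathbb Z)^m$; moreover $m\ge 1$ because $C$ is disconnected in Case 2(b). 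Counting orders in the exact sequence then gives that $Q$ is finite with $|Q|=n\cdot 2^m$.

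Next I would prove the key elementary fact: for an \emph{arbitrary} finite group $Q$, the power map $x\mapsto x^k$ is surjective if and only if $\gcd(k,|Q|)=1$. For the ``if'' direction, choose $k'$ with $kk'\equiv 1\pmod{|Q|}$; since the order of every element divides $|Q|$, one has $x=(x^{k'})^{k}$ for all $x\in Q$, so $P_k$ is onto. For the ``only if'' direction, surjectivity forces bijectivity by finiteness; if some prime $p$ divided $\gcd(k,|Q|)$, Cauchy's theorem would produce an element $z\ne 1$ of order $p$, and then $z^{k}=1=1^{k}$ would contradict injectivity. I would stress that this step needs no commutativity of $Q$, which is precisely why I can avoid analyzing whether the central extension $1\to D\to Q\to C/C^*\to 1$ splits or is abelian.

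Finally I would combine the two steps: $P_k$ is surjective on $Q$ if and only if $\gcd(k,\,n\cdot 2^m)=1$, i.e. if and only if $k$ is prime to $2$ and to $n$, that is, $k$ is odd and coprime to $n$; by the first step this is exactly the condition for $P_k:\tilde G\to\tilde G$ to be dense. The one point requiring genuine care — and the main thing to pin down — is the identification $|Q|=n\cdot 2^m$, in particular that $|C/C^*|$ is a power of $2$. This is where linearity of the adjoint group $G$ and the structure theory of its Cartan subgroups enter, exactly as in the reductions already carried out for Proposition~\ref{cartan pk closed}; everything after that is the above finite-group criterion.
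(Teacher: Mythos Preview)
Your proof is correct and, in fact, cleaner than the paper's. Both reduce (via Corollary~\ref{dense criteria 1}) to determining when $P_k$ is surjective on the finite component group $Q=\tilde C/\tilde C^*$. The paper then proceeds by direct case analysis on the short exact sequence $1\to\mathbb Z/n\mathbb Z\to Q\to C/C^*\to 1$: it first rules out even $k$ using the disconnectedness of $C$; for odd $k$ it observes that $P_k$ is the identity on the $\mathbb Z/2\mathbb Z$-vector space $C/C^*$, so any $k$-th root in $Q$ of an element of $D=\mathbb Z/n\mathbb Z$ already lies in $D$, whence $\gcd(k,n)>1$ prevents surjectivity; finally, for odd $k$ coprime to $n$ it lifts surjectivity on the two ends to surjectivity on $Q$. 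Your route instead computes $|Q|=n\cdot 2^m$ with $m\ge 1$ and applies the elementary criterion that $P_k$ is a bijection of a finite group $Q$ if and only if $\gcd(k,|Q|)=1$. This collapses the even/odd split and the two implications into one line, and it notably does not rely on $D$ being central in $Q$ (a point the paper's ``if'' direction uses implicitly when combining $k$-th roots through the extension). The only structural input you need beyond the paper's setup is that $C/C^*$ is a $2$-group, which --- as you correctly note --- is the fact about Cartan subgroups of linear groups already isolated in \S2.
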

 \begin{proof}
 First observe that, as $C$ is disconnected, for even $k$, $P_k$ is not dense.
 
  In this case we get the following short exact sequence.
 \[1\to \mathbb{Z}/n\mathbb{Z}\to \tilde C/\tilde C^*\to C/C^*\to 1.\]
 Let $\sigma$ be any element in $\mathbb{Z}/n\mathbb{Z}.$ If $\pi:\tilde C/\tilde C^*\to C/C^*$, then $\pi(\sigma)=0.$
 Suppose that $\sigma=t^k$ for some $t\in\tilde C/\tilde C^*.$ Then we have 
 $0=\pi(\sigma)=\pi(t^k)=\pi(t)^k.$ As for odd integer, $P_k$ is identity on $C/C^*$, we conclude that 
 $t\in\mathbb{Z}/n\mathbb{Z}.$ Therefore, if $k$ is not coprime to $n$, $P_k$ is not surjective on
 $\tilde C/\tilde C^*.$ On the other hand, if $k$ is odd and coprime to $n$, then both 
 $P_k:\mathbb{Z}/n\mathbb{Z}\rightarrow\mathbb{Z}/n\mathbb{Z}$ and $P_k:C/C^*\rightarrow C/C^*$ are surjective and
 the result follows from the short exact sequence.
 \end{proof}
 
 \noindent {\bf Case 3.} $C$ is not connected and $\pi_1(G)\simeq\mathbb{Z}/6\mathbb{Z}$. This happens for $E_6^2II.$
  \begin{lemma}
  $P_k:\tilde G\rightarrow\tilde G$ is dense if 
   $k$ is odd and coprime to $3$ and for even $k$, $P_k:\tilde G\to\tilde G$ is not dense.
   
   \end{lemma}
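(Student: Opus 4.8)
The plan is to reduce, via Corollary \ref{dense criteria 1}, the density of $P_k$ on $\tilde G$ to the surjectivity of $P_k$ on the finite component group $\tilde C/\tilde C^*$, and then to analyze that group through the short exact sequence coming from the covering $\pi:\tilde G\to G$. First I would record the structural facts. Since $G$ is adjoint, $Z(\tilde G)=\pi_1(G)\simeq\mathbb{Z}/6\mathbb{Z}$ is central and contained in $\tilde C$; moreover $\tilde C^*$ is connected abelian, hence divisible, and is central in $\tilde C$ by Lemma \ref{C^* is central}. Projecting to the Cartan subgroup $C=\pi(\tilde C)$ of the linear group $G$, its component group $C/C^*$ is an elementary abelian $2$-group, nontrivial because $C$ is disconnected. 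Setting $D=\ker\bigl(\tilde C/\tilde C^*\to C/C^*\bigr)$, which is the image of $\pi_1(G)$ and is therefore a central cyclic subgroup of order dividing $6$, I obtain the central extension
\[
1\to D\to \tilde C/\tilde C^*\to C/C^*\to 1 .
\]

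For even $k$ I would argue non-density directly. Since $C/C^*$ has exponent $2$ and is nontrivial, for even $k$ the map $P_k$ sends every element of $C/C^*$ to the identity, hence is not surjective there. As $\tilde C/\tilde C^*\twoheadrightarrow C/C^*$, the map $P_k$ cannot be surjective on $\tilde C/\tilde C^*$ either, and Corollary \ref{dense criteria 1} shows that $P_k(\tilde G)$ is not dense.

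For the density statement, assume $k$ is odd and coprime to $3$, so $\gcd(k,6)=1$. On the elementary abelian $2$-group $C/C^*$ the map $P_k$ is the identity (as $k$ is odd), hence surjective; on the cyclic group $D$ of order dividing $6$, the condition $\gcd(k,6)=1$ makes $P_k$ a bijection. Given $x\in \tilde C/\tilde C^*$, I would choose a lift $y$ of its image in $C/C^*$, note that $x(y^k)^{-1}$ lies in $D$, write it as $d_0^k$, and then use that $D$ is central, so $d_0$ commutes with $y$, to get $x=d_0^k y^k=(d_0 y)^k$. Thus $P_k$ is surjective on $\tilde C/\tilde C^*$, and Corollary \ref{dense criteria 1} yields density.

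The step I expect to be the main obstacle is this combination in the odd case: surjectivity of a power map on a subgroup and on a quotient does not propagate to the whole group in general, and here it is precisely the centrality of $D$, inherited from the center $\pi_1(G)$ of $\tilde G$, together with the divisibility and centrality of $\tilde C^*$, that legitimizes the factorization $d_0^k y^k=(d_0 y)^k$. The other point requiring care is the verification, for $E_6^2 II$, that $C/C^*$ genuinely has exponent $2$ and that $D$ is central of order dividing $6$; both follow from the structure of Cartan subgroups of linear groups used throughout this section.
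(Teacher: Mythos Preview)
Your proof is correct and follows essentially the same route as the paper. The paper argues with the central extension $1\to\pi_1(G)\to\tilde C\to C\to 1$ at the level of the full Cartan subgroups and invokes Theorem~\ref{dense criteria}, whereas you pass to component groups and use Corollary~\ref{dense criteria 1}; since $\tilde C^*$ is divisible and central these are equivalent formulations of the same idea, and in both cases the key step is the centrality of the $\mathbb{Z}/6\mathbb{Z}$ kernel allowing the factorization $d_0^k y^k=(d_0y)^k$.
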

   \begin{proof}
   We first note that, if $k$ is odd and coprime to $3$ (hence to $6$), then $P_k$ is surjective both on 
 $\mathbb{Z}/6\mathbb{Z}$ and $C.$ Hence $P_k$ is surjective on $\tilde C.$ 
 Then the result follows by applying Theorem \ref{dense criteria}.
 
 Since $C$ is disconnected, $P_2$ can not be dense in $\tilde G.$
 \end{proof}

 \noindent {\bf Case 4.}  
   Let $\pi_1(G)\simeq\mathbb{Z}/4\mathbb{Z}$ or $\mathbb{Z}/8\mathbb{Z}$ and $C$ is disconnected. 
   This occurs for $D_nI$($n\ge 4$ and $n$ odd) and $D_nI$($n\ge 4$ and $n$ even) respectively.
   The following lemma is very straightforward and hence we omit the details.
   \begin{lemma}
   In case 4, $P_k:\tilde G\to\tilde G$ is dense 
   if and only if $k$ is odd. Moreover, if $G'$ be any cover of $G$, then $P_k:G'\to G'$ is dense if and only if $k$ is odd. 
   \end{lemma}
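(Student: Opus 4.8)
The plan is to reduce everything to the power map on Cartan subgroups via Theorem~\ref{dense criteria}, and then to exploit the central extension
\[1\to\pi_1(G)\to\tilde C\to C\to 1\]
attached to each Cartan subgroup $\tilde C$ of $\tilde G$, where $C=\pi(\tilde C)$ is the corresponding Cartan subgroup of the adjoint group $G$. Since $G$ is adjoint, $\pi_1(G)=Z(\tilde G)$, and the center is contained in every Cartan subgroup; hence $\pi_1(G)$ is a \emph{central} subgroup of $\tilde C$, isomorphic in Case 4 to $\ZZ/4\ZZ$ or $\ZZ/8\ZZ$. I would also record the shape of $C$: as $G$ is linear and adjoint, $C=TN$ (almost direct product) with $T$ abelian and $N$ connected nilpotent, and $T=F\times T_2$ with $F$ a finite $\ZZ/2\ZZ$-vector space and $T_2$ connected abelian.

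For even $k$, I would first note that the hypothesis that some Cartan subgroup is disconnected forces $F\neq 0$ for that $C$. Since every element of $F$ has order dividing $2$, the map $P_k$ kills $F$ whenever $k$ is even, so $P_k(C)\subseteq C^*\subsetneq C$; in particular $P_k$ is not surjective on $C$. Projecting $\tilde C\to C$, surjectivity of $P_k$ on $\tilde C$ would force surjectivity on $C$, so $P_k$ fails to be surjective on $\tilde C$ as well. By Theorem~\ref{dense criteria}, $P_k$ is not dense.

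For odd $k$, I would show $P_k(\tilde C)=\tilde C$ for every Cartan subgroup $\tilde C$ and then invoke Theorem~\ref{dense criteria}. Two surjectivity facts feed into this. On the kernel $\pi_1(G)\cong\ZZ/4\ZZ$ or $\ZZ/8\ZZ$, an odd $k$ is coprime to $4$ and to $8$, so multiplication by $k$ is a bijection and $P_k$ is surjective there. On the quotient $C=TN$ with $T=F\times T_2$, the odd power map is the identity on the $2$-group $F$ and is surjective on the connected abelian group $T_2$ and the connected nilpotent group $N$ (both divisible), so $P_k(C)=C$. Now the standard central-extension argument applies: given $x\in\tilde C$, pick $y\in\tilde C$ with $\overline{y}^{\,k}=\overline{x}$ in $C$; then $xy^{-k}$ lies in the central kernel $\pi_1(G)$, so $xy^{-k}=z^k$ for some central $z$, and since $z$ is central $(yz)^k=y^kz^k=x$. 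Hence $x\in P_k(\tilde C)$, and $P_k$ is dense.

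For the ``moreover'' part, I would write an arbitrary connected cover as $G'=\tilde G/\Lambda$ with $\Lambda\subseteq\pi_1(G)=Z(\tilde G)$, so that its Cartan subgroups are of the form $C'=\tilde C/\Lambda$. For even $k$, $C'$ surjects onto the disconnected $C$, so as above $P_k(C')\neq C'$ and $P_k$ is not dense on $G'$. For odd $k$, surjectivity of $P_k$ on $\tilde C$ passes to the quotient $C'$ (lift an element, take a $k$-th root upstairs, push down), so $P_k(C')=C'$ and $P_k$ is dense by Theorem~\ref{dense criteria}. The only point demanding care is that these arguments be run for \emph{every} Cartan subgroup, not merely the distinguished disconnected one; this is automatic, since the structural description of $C$ and the centrality of $\pi_1(G)$ hold uniformly, so I expect no genuine obstacle beyond this bookkeeping, which is why the lemma can be stated as straightforward.
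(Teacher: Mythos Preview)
Your argument is correct and follows exactly the pattern the paper uses in the analogous cases (e.g., Lemma~\ref{P1} for Case~1): reduce via Theorem~\ref{dense criteria} to surjectivity of $P_k$ on Cartan subgroups, then use the central extension $1\to\pi_1(G)\to\tilde C\to C\to 1$ together with the fact that $P_k$ is surjective on both the $2$-group kernel and on $C$ precisely when $k$ is odd. The paper itself omits the proof as ``very straightforward,'' so you have simply supplied the details it leaves implicit; there is no difference in approach to report.
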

   
 \noindent {\bf Case 5.} 
 $G$ is split. 

As usual, let $\tilde G$ be its universal cover. Let $Z$ denote the discrete group $\pi_1(G).$
Suppose that there is a Cartan subgroup $C$ of the form $A\times F$, 
where $A$ is simply connected and $F$ is a $\mathbb{Z}/2\mathbb{Z}$ vector space.
Then we get the following short exact sequence,
\[1\to Z\to \tilde C\to A\times F\to 1,\] where $\tilde C$ is the corresponding Cartan subgroup of $C.$ 
 Let $K$ be the kernel of the map $\tilde C\to A.$ Then we have the following two short exact sequences
 \[1\to Z\to K\to F\to 1.\] and
 \[1\to K\to \tilde C\to A\to 1.\]
 Now, as $A$ is simply connected, we get a section of the above short exact sequence, so that $\tilde C=A\ltimes K.$
 It turns out that $\tilde C$ is a direct product of $A$ and $K$, as $A$ is connected
 and $K$ is discrete. Hence $P_k(\tilde C)=\tilde C$ if and only if $P_k(K)=K.$
 Furthermore, let $\sigma\in Z.$ Let $\sigma=t^k$ for some $t\in K.$ We denote $\pi$ is the map
 from $K\to F$ in the above short exact sequence. Note that for even $k$, $P_k(\tilde G)$ is not dense. For odd $k$, 
 as $P_k:C\to C$ is identity,
 \[0=\pi(\sigma)=\pi(t^k)=P_k(\pi(t))=\pi(t),\]
 we get $t\in Z.$ Thus we have $P_k(\tilde G)$ is dense in $\tilde G$ if and only if $P_k(Z)=Z.$
 
 Let $D$ be a subgroup of $Z$
 and $G'=\tilde G/D.$ Then by above procedure we can conclude that
 $P_k(G')$ is dense in $G'$ if and only if $P_k(Z/D)=Z/D.$ 
 
 \begin{remark}
   In Case 3, the authors have not addressed the case when $k$ is odd and divisible by $3$, 
  as they are not able to calculate the number of connected components of the
 Cartan subgroups involved. They have also left unaddressed the case when $G$ is a non-split, non-compact simple Lie group and
 $\pi_1(G)$ contains an infinite cyclic subgroup, for the same reason.
\end{remark}

 \section{Relation between square map and weak exponentiality}

 In this section we discuss the relation between the density of $P_2$ and weak exponentiality of Lie groups. Also, we
 prove Corollary \ref{square map}.\\

\noindent{\it Proof of Corollary~\ref{square map}}:
Let $R$ be the solvable radical of $G.$ Then $G/R$ is a connected linear semisimple Lie group. 
  As any connected semisimple Linear Lie group is precisely the Hausdorff connected 
  component of some Zariski connected semisimple real algebraic
 group, Cartan subgroups of a connected linear Lie group are either connected or it has $2^m$ (for some positive integer $m$)
 number of disconnected components. Let $C$ be any Cartan subgroup and $C^*$ be its connected component.
 As $P_2(G)$ is dense in $G$, by Corollary \ref{dense criteria 1}, we get $P_2:C/C^*\to C/C^*$ is surjective.
 This implies that $C=C^*$, and hence $G$ is weakly exponential.
 
 The converse follows from Corollary \ref{density and w.e}.
  \qed

 \begin{example}\label{E}
We give an example of a non-linearizable Lie group $G$ which is not weakly exponential, but $P_2(G)$ is dense.
  Let us consider $G=\rm PSL(2,\RR)$ and $\tilde G=\widetilde{\rm SL(2,\RR)}.$
  We know that $Z(\tilde G)$ is infinite cyclic. Fix a generator $\sigma$. 
  Let $Z$ to be the subgroup of $Z(\tilde G)$ generated by $3\sigma$ and define $G'=\tilde G/Z.$ 
  There is a Cartan subgroup of $G$ isomorphic to $\RR$, and the corresponding Cartan subgroup for $\tilde G$ is 
  isomorphic to $\RR\times \ZZ$, where the factor $\ZZ$ is contributed by the center. Therefore, the corresponding 
  Cartan subgroup of $G'$ would be isomorphic to $\RR\times \ZZ/3\ZZ$, which is not connected.
  By Theorem \ref{dense criteria}, $P_2(G')$ is dense in $G'$, but it has a disconnected Cartan subgroup, and hence
  $G'$ is not weakly exponential.
\end{example}  

\begin{remark}
Let $G$ be a connected Lie group, not necessarily linear, and let $R$ be its radical. 
Suppose the adjoint semisimple group ${\rm Ad}(G/R)$ does not contain any simple factor whose fundamental 
group contains an infinite cyclic subgroup. It follows from the cases discussed in the previous section (\S 4) that 
if $P_2(G)$ is dense then $G$ is weakly exponential. 
\end{remark}
  
 \section{Density on full rank subgroup}
  We recall that a Lie subgroup $A$ of $G$ is of \emph{full rank} if its Lie algebra $\mathfrak a$ contains
 a Cartan subalgebra of $\mathfrak g={\rm Lie}(G).$ In this case, every Cartan subalgebra of $\fra$ is a 
 Cartan subalgebra of $\frg$ (\cite[Proposition 3, \S 3, Ch. VII]{B}).
 
\begin{proposition}\label{maximal rank odd integer}
Let $G$ be a connected Lie group and let $A$ be a connected subgroup of full rank. 
Let $k\ge 1$ be an odd integer. If $P_k(G)$ is dense in $G$, then $P_k(A)$ is dense in $A$.
\end{proposition}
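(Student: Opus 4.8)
The plan is to transfer density from $G$ to $A$ through a comparison of Cartan subgroups. By Theorem~\ref{dense criteria} applied to the connected group $A$, it suffices to show that $P_k(C_A)=C_A$ for every Cartan subgroup $C_A$ of $A$. So I fix such a $C_A$, write $\mathfrak{c}={\rm Lie}(C_A)$, and use the full rank hypothesis: $\mathfrak{c}$ is then a Cartan subalgebra not only of $\mathfrak{a}={\rm Lie}(A)$ but also of $\mathfrak{g}$. Let $C_G$ be the Cartan subgroup of $G$ with ${\rm Lie}(C_G)=\mathfrak{c}$. The connected subgroup generated by $\exp\mathfrak{c}$ is the same whether formed inside $A$ or inside $G$, so $C_A$ and $C_G$ have a common identity component, which I denote $C^*$. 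Since $P_k(G)$ is dense, Theorem~\ref{dense criteria} gives $P_k(C_G)=C_G$.

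The key structural point I would prove is the containment $C_A\subseteq C_G$. An element of $C_A$ normalizes $\mathfrak{c}$ and fixes every root of $\mathfrak{a}$; because the relative Weyl group of $(\mathfrak{a},\mathfrak{c})$ acts faithfully on the span of those roots, while an inner automorphism of $A$ fixes the center of $\mathfrak{a}$, such an element must in fact act trivially on all of $\mathfrak{c}$. Thus $C_A=Z_A(\mathfrak{c})=Z_G(\mathfrak{c})\cap A$, and since a centralizer of $\mathfrak{c}$ automatically fixes every root of $\mathfrak{g}$, we get $C_A\subseteq C_G$, and therefore an injection $C_A/C^*\hookrightarrow C_G/C^*$ of component groups. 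The delicate part here, and the step I expect to be the main obstacle, is to make this argument legitimate in the presence of a nontrivial solvable radical, where $C_A$ could a priori pick up extra components. For this I would invoke W\"ustner's decomposition $C_A=(C_A\cap S)(C_A\cap R)$: the factor $C_A\cap R$ is connected and so lies in $C^*$, which isolates the reductive Levi part $C_A\cap S$, where the elements of $C_A$ are genuinely semisimple and the centralizing argument above is transparent. The same decomposition shows $C^*$ is central in $C_A$, so that $P_k(C_A)=C_A$ is equivalent to surjectivity of $P_k$ on the component group $C_A/C^*$ (here Proposition~\ref{cartan pk closed}, giving that $P_k(C_A)$ is open and closed, guarantees that only the component group matters).

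Finally I would conclude on the level of component groups. The group $C_G/C^*$ is finitely generated abelian and carries a surjective power map $P_k$, since $P_k(C_G)=C_G$; surjectivity of multiplication by $k$ on a finitely generated abelian group forces it to be \emph{finite} of order prime to $k$ (in particular it rules out the infinite-cyclic component groups, such as that of a Cartan subgroup of $\widetilde{{\rm SL}(2,\RR)}$, which is exactly why the hypothesis is needed). Consequently its subgroup $C_A/C^*$ is also finite of order prime to $k$, so $P_k$ is bijective on $C_A/C^*$; lifting back through the connected, divisible (indeed nilpotent) central subgroup $C^*$, whose own power map is surjective, yields $P_k(C_A)=C_A$. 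Restricting to odd $k$ keeps this transfer in precisely the regime where the hypothesis $P_k(G)$ dense is compatible with the $2$-primary disconnectedness typical of real Cartan subgroups, so that the obstruction on the $2$-part of $C_A/C^*$ is absorbed automatically and the above coprimality argument applies. Applying Theorem~\ref{dense criteria} to $A$ then completes the proof.
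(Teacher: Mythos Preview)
Your argument is essentially correct and takes a genuinely different route from the paper's.

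The paper first reduces to the case where $Z(G)$ is discrete (by successively killing connected centers), then passes to the linear quotient $G/Z(G)$, where every Cartan subgroup has component group an $\mathbb{F}_2$-vector space. Oddness of $k$ enters precisely here: $P_k$ is the identity on such a component group, and a short diagram chase then shows that the ``central'' piece $Z(G)/(Z(G)\cap C^*)$ is torsion coprime to $k$; this is what transfers to the subgroup $C\cap A$. The paper never needs $C_A\subseteq C_G$; it works instead with $C'=C_G\cap A$ and shows directly that regular elements of $G$ lying in $A$ acquire $k$-th roots inside $C'$.

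Your approach is more structural: you embed $C_A/C^*\hookrightarrow C_G/C^*$, observe that the target is finitely generated and that $P_k$ is surjective on it, and conclude it is finite of order prime to $k$; the same then holds for the subgroup. This is cleaner, and notably your argument \emph{nowhere uses that $k$ is odd}: it proves Theorem~\ref{full rank} in one stroke, whereas the paper splits into Proposition~\ref{maximal rank odd integer} (odd $k$) and Proposition~\ref{maximal rank even integer} ($k=2$), with the latter requiring the additional observation that $P_2$ dense forces the adjoint Cartan subgroups to be connected. Your closing remarks about the role of oddness are therefore commentary rather than a step of the proof.

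Two imprecisions are worth flagging. First, you assert $C_G/C^*$ is finitely generated \emph{abelian}; abelianness is not established (Lemma~\ref{C^* is central} gives only that $C^*$ is central when the center is discrete, and you have not made that reduction), but it is not needed: $C_G/C^*$ is finitely generated \emph{nilpotent} (via W\"ustner, since $C_G\cap R$ is connected and $Z(S)$ is finitely generated), and for such groups surjectivity of $P_k$ with $k>1$ still forces finiteness of order prime to $k$. Second, your lifting step invokes ``$C^*$ central in $C_A$'', which is again unjustified in general and unnecessary: Proposition~\ref{cartan pk closed} already gives that $P_k(C_A)$ is a union of components, so surjectivity of $P_k$ on $C_A/C^*$ suffices. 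With these adjustments your proof stands.
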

\begin{proof} 
 Note that since $A$ is of full rank, $Z(G)^*\subset A$. By killing $Z(G)^*$, it is easy to see that $A_1=A/Z(G)^*$
is a full rank subgroup of $G_1=G/Z(G)^*.$ Inductively, set $Z_i=Z(G_i)^*$ and $A_i=A_{i-1}/Z_{i-1}.$ 
Then $A_i$ is of full rank in $G_i$. Since the dimension of $G$ is finite, $Z_i$ is discrete for some $i$. 
It is enough to show that $P_k(A_i)$ is dense in $A_i$. For, if $H$ is a connected Lie group and $Z\subset H$ is 
central connected, then $P_k(H)$ is dense in $H$ if and only if $P_k(H/Z)$ is dense in $H/Z$. This is because, 
the projection $\pi:H\to H/Z$ is a submersion, hence an open map, and $\pi\inv(P_k(H/Z))=P_k(H)$. 

Thus we can assume that $Z(G)$ is  
discrete. Since $Z(G)$ is the fundamental group of
linear Lie group ${\rm Ad}(G)$, it is a finitely generated abelian group. 

We know that $\regg\cap A$ is dense in $A$ 
(see proof of \cite[I.6]{N}). It is enough to prove that elements in $\regg\cap A$ admit $k$-th roots in $A$. 
Let $g\in \regg\cap A$. There is a unique Cartan subgroup $C$ of $G$ such that $g\in C$. The subgroup $C':=C\cap A$ 
contains the identity component $C^*$, which is also the identity component of the
Cartan subgroup of $A$ that contains $g$, with Lie algebra given by $N_\fra(Ad_g-I)=N_\frg(Ad_g-I)$. 
Therefore, $C'$ is the disjoint union of cosets of $C^*$, hence open in $C'$. It is enough to show that $P_k(C')=C'$.

 Writing $D=Z(G)$, let $\pi:G\to G/D$ be the projection. Now, by Theorem \ref{dense criteria}, $P_k(C)=C$. We get an exact
sequence as follows, where $E$ is the corresponding Cartan subgroup of $G/D$: \[1\to D\to C\stackrel\pi\to E\to 1.\]
This gives another short exact sequence \[1\to D/(D\cap C^*)\to C/C^*\stackrel\pi\to E/E^*\to 1.\]However, $E/E^*$ is 
an $\ZZ/2\ZZ$-vector space, so for odd $k\ge 0$, $P_k$ induces identity on $E/E^*$. Therefore $P_k$ is surjective 
on $D/(D\cap C^*)$. Indeed, let $x\in D/(D\cap C^*)$. Since $P_k(C)=C$, there is some $y\in C/C^*$ such that $x=y^k$. 
But $0=\pi(x)=\pi(y)^k=\pi(y)$, so $y\in D/(D\cap C^*)$. This means, $D/(D\cap C^*)$ is a torsion abelian group with 
torsion coprime to $k$. However, $C'$ fits into a short exact sequence \[1\to D'\to C'\to F\to 1,\]where $F=\pi(C')$ 
is an open subgroup of $E$, and $D'=D\cap C'$. This gives rise to the short exact sequence as follows. 
\[1\to D'/(D'\cap C^*)\to C'/C^*\to F/F^*\to 1.\]
Here $F/F^*$ is the group of connected component of $F\subset E$, 
which must be an $\mathbb F_2$-vector space, and $D'/(D'\cap C^*)$ is a subgroup of $D/D\cap C^*$, hence a torsion 
abelian group with torsion coprime to $k$. Therefore $P_k$ is surjective on $C'/C^*$.
\end{proof}
 
\begin{lemma}\label{maximal rank on abelian Cartan}
 Let $G$ be a connected Lie group and let $A$ be a connected full rank subgroup. Let $C$ be any Cartan subgroup 
 and $C^*$ be its connected component. Suppose that
 $C/C^*$ is finitely generated abelian.
 Let $k>0$ be an integer. If $P_k:G\rightarrow G$ is dense then $P_k:A\rightarrow A$ is dense.
\end{lemma}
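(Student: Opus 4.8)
The plan is to follow the strategy of Proposition \ref{maximal rank odd integer}, replacing the parity argument there (which exploited that the relevant component groups are $\mathbb F_2$-vector spaces) by an argument that uses the finite generation hypothesis to cover arbitrary $k$, including even $k$.

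First I would reduce to the case where $Z(G)$ is discrete, exactly as in Proposition \ref{maximal rank odd integer}: setting $G_1=G/Z(G)^*$ and $A_1=A/Z(G)^*$, the subgroup $A_1$ is again of full rank in $G_1$, and the projection $\pi\colon G\to G_1$ restricts to an open map $A\to A_1$ with $\pi\inv(P_k(A_1))=P_k(A)$, so $P_k(A)$ is dense if and only if $P_k(A_1)$ is. Moreover the hypothesis is inherited: if $C$ is a Cartan subgroup of $G$, then by Lemma \ref{A} $C_1=\pi(C)$ is a Cartan subgroup of $G_1$ with $C_1^*=\pi(C^*)$ and $C_1/C_1^*\cong C/C^*$, so $C_1/C_1^*$ is again finitely generated abelian. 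Iterating kills the connected part of the center, so I may assume $Z(G)$ is discrete; then $Z(G)$ is the fundamental group of the linear group ${\rm Ad}(G)$, hence finitely generated abelian.

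Next, since $\regg\cap A$ is dense in $A$ (see the proof of \cite[I.6]{N}), it suffices to show that every $g\in\regg\cap A$ admits a $k$-th root in $A$. Fix such a $g$, let $C$ be the unique Cartan subgroup of $G$ containing $g$, and put $C'=C\cap A$. As in Proposition \ref{maximal rank odd integer}, $C'$ contains the identity component $C^*$ and is a union of cosets of $C^*$, with $g\in C'$; it is therefore enough to prove $P_k(C')=C'$. Since $P_k(G)$ is dense, Theorem \ref{dense criteria} gives $P_k(C)=C$. Because $C/C^*$ is abelian, projection carries this to surjectivity of multiplication by $k$ on $C/C^*$; as $C/C^*$ is finitely generated abelian, surjectivity of multiplication by $k$ forces $C/C^*$ to be finite of order coprime to $k$. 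Hence the subgroup $C'/C^*\le C/C^*$ is also finite of order coprime to $k$, and multiplication by $k$ is bijective on it.

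Finally I would lift this back to $C'$, and here the reduction to discrete center pays off. By Lemma \ref{C^* is central} the identity component $C^*$ is central in $C$, hence in $C'$, and being connected nilpotent it is divisible. Given $x\in C'$, choose $y\in C'$ whose class in $C'/C^*$ is a $k$-th root of the class of $x$; then $y^k=xc$ for some $c\in C^*$, and picking $d\in C^*$ with $d^k=c\inv$ we obtain $(yd)^k=x$ with $yd\in C'$, using that $d$ is central. Thus $P_k(C')=C'$, so $g\in P_k(A)$; as $g\in\regg\cap A$ was arbitrary and $\regg\cap A$ is dense in $A$, the image $P_k(A)$ is dense. I expect the main obstacle to be precisely the passage from the component group to $C'$ itself: this is exactly where both the finite generation hypothesis (to force the component group to be finite of order coprime to $k$ when $k$ is even) and the centrality of $C^*$ from Lemma \ref{C^* is central} (valid only after reducing to discrete center) are indispensable.
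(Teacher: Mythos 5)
Your proof is correct, and its core mechanism is the same as the paper's: the hypothesis that $C/C^*$ is finitely generated abelian, combined with surjectivity of $P_k$ on $C$ (from Theorem \ref{dense criteria}), forces the component group to be finite of order coprime to $k$, and this coprimality is inherited by the subgroup $(C\cap A)/C^*$, which yields surjectivity there. The differences are organizational but real. The paper first invokes Proposition \ref{maximal rank odd integer} to reduce to the single exponent $k=2$, then runs the coprimality argument only for $2$ (via Corollary \ref{dense criteria 1}); this reduction silently requires a gluing step for general even $k$ (writing $k=2^a m$ with $m$ odd, combining density of $P_2(A)$ and $P_m(A)$ into density of $P_k(A)$ via Theorem \ref{dense criteria} applied to $A$), which the paper does not spell out. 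You instead treat all $k\ge 2$ uniformly, so you never use the conclusion of Proposition \ref{maximal rank odd integer}, only its setup (density of $\regg\cap A$ in $A$, the reduction to discrete center, and $C^*\subseteq C'=C\cap A$). Second, you make explicit the final lift from surjectivity of $P_k$ on the component group $C'/C^*$ to $P_k(C')=C'$, using Lemma \ref{C^* is central} (centrality of $C^*$, available precisely because of the reduction to discrete center) together with divisibility of the connected nilpotent group $C^*$: from $y^k=xc$ with $c\in C^*$ you correct $y$ by a central $k$-th root of $c^{-1}$. The paper's proofs of both Proposition \ref{maximal rank odd integer} and this lemma stop at surjectivity on the component group and leave this lift implicit, so writing it out is a genuine gain in completeness. (One trivial caveat: your deduction that surjectivity of multiplication by $k$ forces finiteness of a finitely generated abelian group needs $k\ge 2$; the case $k=1$ is vacuous anyway.)
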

\begin{proof}
In view of Proposition \ref{maximal rank odd integer}, it is enough to prove for integer $k=2.$ 
As $P_2(G)$ is dense in $G$, by Corollary \ref{dense criteria 1}, $P_2:C/C^*\rightarrow C/C^*$ is surjective.
By hypothesis $C/C^*$ is a torsion  abelian group and torsion is coprime to $2.$ We recall from the
proof of Proposition \ref{maximal rank odd integer} that, to prove $P_k:A\rightarrow A$ is dense, it only
needs to show that $P_2:A\cap C/C^*\rightarrow A\cap C/C^*$ is surjective. Note that $A\cap C/C^*$ is also torsion subgroup of $C/C^*$
and torsion is coprime to $2$ as well. This proves the lemma.
\end{proof}
\begin{proposition}\label{maximal rank even integer}
  Let $G$ be a connected Lie group and let $A$ be a connected full rank subgroup of $G.$ Then $P_2:G\to G$ is dense
  implies $P_2:A\to A$ is dense.
 \end{proposition}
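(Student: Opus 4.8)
The plan is to reduce the case $k=2$ to the finitely generated abelian situation already handled in Lemma \ref{maximal rank on abelian Cartan}, by showing that density of $P_2$ forces every Cartan subgroup of $G$ to have finitely generated abelian component group.

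First I would repeat the central reduction from the proof of Proposition \ref{maximal rank odd integer}: killing the identity component $Z(G)^*$ iteratively, one reduces to the case where $Z(G)$ is discrete, since full rank is preserved and density of $P_2$ passes up and down a connected central quotient (the projection being an open submersion with $\pi\inv(P_2(G/Z))=P_2(G)$). After this reduction $D:=Z(G)$ is a finitely generated abelian group contained in every Cartan subgroup, exactly as in that proof.

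The key step is to analyze an arbitrary Cartan subgroup $C$ of $G$ via the corresponding Cartan subgroup $E$ of the linear group $G/D$. As in the proof of Proposition \ref{maximal rank odd integer}, $C\to E$ is a covering with kernel $D$ (here $C^*$ is normal in $C$), and there is a short exact sequence
\[1\to D/(D\cap C^*)\to C/C^*\to E/E^*\to 1.\]
Since $P_2(G)$ is dense, Theorem \ref{dense criteria} gives $P_2(C)=C$, so the induced map $\bar P_2:C/C^*\to C/C^*$, $\bar x\mapsto\overline{x^2}$, is surjective. Because $G/D$ is linear, $E/E^*$ is an $\mathbb F_2$-vector space, on which the square map is trivial; as the projection $q:C/C^*\to E/E^*$ is a surjective homomorphism intertwining the square maps, $q\circ\bar P_2$ is trivial while both $q$ and $\bar P_2$ are surjective, which forces $E/E^*=0$. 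Hence $E$ is connected and $C/C^*\cong D/(D\cap C^*)$ is a quotient of $D$, so it is finitely generated abelian.

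Having shown that every Cartan subgroup of $G$ has finitely generated abelian component group, I would finish by invoking Lemma \ref{maximal rank on abelian Cartan} with $k=2$, giving that $P_2:A\to A$ is dense. The step requiring the most care is the surjectivity argument forcing $E$ to be connected: this is precisely where the even exponent is essential, since on an $\mathbb F_2$-vector space the odd power maps are the identity whereas the square map is zero, and it is what lets the even case collapse onto the finitely generated abelian situation rather than confronting an arbitrary component group directly.
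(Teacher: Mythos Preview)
Your proof is correct and follows essentially the same route as the paper's: reduce to discrete center, use linearity of $G/Z(G)$ together with the fact that $E/E^*$ is an $\mathbb F_2$-vector space to force $E$ connected from the surjectivity of $P_2$ on $C$, conclude that $C/C^*$ is a quotient of $Z(G)$ and hence finitely generated abelian, and then invoke Lemma~\ref{maximal rank on abelian Cartan}. The only cosmetic difference is that the paper phrases the connectedness of $E$ as ``$P_2$ dense on ${\rm Ad}(G)$ forces $C_{\rm Ad}$ connected'' and then argues $\pi(C^*)=C_{\rm Ad}$ directly, whereas you extract it from the short exact sequence of component groups; the underlying $\mathbb F_2$-argument is the same.
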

\begin{proof}
Arguing as before (see the proof of Proposition \ref{maximal rank odd integer}), we may assume that $Z(G)$ is discrete. Now,
 let $\pi: G\to {\rm Ad}(G)$ be the covering map and $C$ be a Cartan subgroup of $G.$ Let $C_{\rm Ad}=\pi(C)$ be the corresponding
 Cartan subgroup of ${\rm Ad}(G).$ Since $P_2(G)$ is dense in $G$, $P_2:{\rm Ad}(G)\to {\rm Ad}(G)$
 is dense and hence $C_{\rm Ad}$ is connected.
 Let $C^*$ denote the identity component of $C.$ We observe that the restriction map $\pi:C^*\to C_{\rm Ad}$ is surjective.
 Indeed, $\pi(C^*)$ is non empty open and closed in $C_{\rm Ad}$ (for details, see the proof of Lemma \ref{B}).
 Therefore we get a surjective homomorphism from
 $Z(G)\times C^*\to C_{\rm Ad}.$ Hence $C/C^*$ is finitely generated abelian group (as $Z(G)$ is finitely generated abelian group).
 Thus by Lemma \ref{maximal rank on abelian Cartan}, it follows that $P_2:A\to A$ is dense.
 \end{proof}
 
\noindent{\it Proof of Theorem~\ref{full rank}}:
The theorem follows from Proposition \ref{maximal rank odd integer} and Proposition \ref{maximal rank even integer}.
\qed\\

\noindent{\it Proof of Corollary~\ref{dense criteria 2}}:
 It is noted in Neeb (\cite{N}) that, $Z_G(t_a)$ is a connected, full rank, reductive subgroup of $G.$ Therefore forward implication
 of the statement follows from Theorem \ref{full rank}. 
 Now for the converse, let $C'$ be any Cartan subgroup of $G.$ Let $\mathfrak{c'}$ be the Lie algebra associated to $C'.$
 Since $\mathfrak{g}$ is reductive, $\mathfrak{c'}$ is reductive in $\mathfrak{g}$ and its associated Cartan subgroup
 is $C'=Z_G(\mathfrak{c'}).$ We may assume that
 $\mathfrak c'= t'_a +\mathfrak {a'}$,
 where $\mathfrak a' \subseteq \mathfrak a$ and $t'_a\supseteq t_a.$
 Then it is immediate
 that $C'\subseteq Z_G(t_a)$ and by \cite[Lemma I.5]{N}, $C'$ is a Cartan subgroup of $Z_G(t_a).$
 Now, by applying Theorem \ref{dense criteria} on $Z_G(t_a)$, we conclude that $P_k(C')=C'$, as required.
 \qed\\

{\bf Acknowledgement.} We thank Professor S. G. Dani for his useful comments, particularly Remark \ref{algebraic case}.

\vskip 2em
\parindent=0pt

{\small Saurav Bhaumik 

Department of Mathematics, 

Indian Institute of Technology Bombay, 

Powai, Mumbai 400076, India

\texttt{saurav@math.iitb.ac.in}
\vskip 1em

Arunava Mandal

Department of Mathematics, 

Indian Institute of Technology Bombay, 

Powai, Mumbai 400076, India

\texttt{amandal@math.iitb.ac.in}

}
\end{document}